\theoremstyle{plain}
\newtheorem{theorem}{Theorem}[section]
\newtheorem{lemma}[theorem]{Lemma}
\newtheorem{corollary}[theorem]{Corollary}
\newtheorem{question}[theorem]{Question}
\theoremstyle{definition}
\newtheorem{definition}[theorem]{Definition}
\newtheorem{example}[theorem]{Example}
\newtheorem{remark}[theorem]{Remark}
\newcommand{\ds}{\displaystyle}
\def\RR{\mathbb{R}}
\definecolor{darkpastelgreen}{rgb}{0.01, 0.75, 0.24}%%%%%%
\begin{document}

\title{CONTRACTIBLE 3-MANIFOLDS  AND  
THE DOUBLE 3-SPACE PROPERTY}

\author{Dennis J. Garity}
\address{Mathematics Department, Oregon State University,
Corvallis, OR 97331, U.S.A.}
\email{garity@math.oregonstate.edu}
\urladdr{http://www.math.oregonstate.edu/\symbol{126}garity}

\author{Du\v{s}an D. Repov\v{s}}
\address{%
Faculty of Education,
and Faculty of Mathematics and Physics,
University of Ljubljana\\
Ljubljana, SI-1000, Slovenia}
\email{dusan.repovs@guest.arnes.si}
\urladdr{http://www.fmf.uni-lj.si/\symbol{126}repovs}

\author{David G. Wright}
\address{Department of Mathematics, Brigham Young University,
Provo, UT 84602, U.S.A.}
\email{wright@math.byu.edu}
\urladdr{http://www.math.byu.edu/\symbol{126}wright}

\date{\today}

\subjclass[2010]{Primary 54E45, 54F65; Secondary 57M30, 57N10}

\keywords{contractible 3-manifold, open 3-manifold, Whitehead link, defining sequence, geometric index, McMillan contractible 3-manifold, Gabai link}

\begin{abstract}
Gabai  showed that the Whitehead manifold is the union of two submanifolds each of which is homeomorphic to $\mathbb R^3$ and whose intersection is again homeomorphic to $\mathbb R^3$.  Using a family of generalizations of the Whitehead Link, we show that there are uncountably many contractible 3-manifolds with this double 3-space property.  Using a separate family of generalizations of the Whitehead Link and using an extension of interlacing theory, we also show that there are uncountably many contractible 3-manifolds that fail to have this property.

\end{abstract}
%%%%%%%%%%%%%%%%%%%%%%%%%%%%%%%%
\maketitle

\section{Introduction}\label{IntroSec}
%%%%%%%%%%%%%%%%%%%%%%%%%%%%%%%%
Gabai \cite{Gab11} showed a surprising result that the  Whitehead contractible 3-manifold \cite{Whi35} is the union of two sub-manifolds each of which is homeomorphic to Euclidean 3-space $\mathbb R^3$ and whose intersection is also homeomorphic to $\mathbb R^3$.  A 3-manifold with this \emph{double 3-space property} must be a contractible open 3-manifold.  The manifold $\mathbb R^3$ clearly has this property, but it takes a lot of ingenuity to show that the Whitehead contractible 3-manifold has the double 3-space property.  This naturally raises two questions: 
\begin{enumerate}
\item[1)]  Are there other contractible 3-manifolds with this property? 
\item[2)]  Do all contractible 3-manifolds have this property?  
\end{enumerate}
We show the answer to the first question is yes by constructing uncountably many contractible 3-manifolds with the double 3-space property.  We show the answer to the second question is no by constructing uncountably many contractible 3-manifolds that fail to have the double 3-space property. The answer to the first question requires careful use of geometric index and a detailed analysis of a family of links we call Gabai Links. The answer to the second question requires a careful extension of interlacing theory originally introduced in \cite{Wri89} and \cite{AS89}.

\section{Definitions and Preliminaries}\label{RefSec}  
%%%%%%%%%%%%%%%%%%%%%%%%%%%%%%%%
A solid torus is homeomorphic to 
 $B^2 \times S^1$ where $B^2$ is a 2-dimensional disk and $S^1$ is a circle.  All spaces and embeddings will be piecewise-linear \cite{RS82}.  If $M$ is a manifold with boundary, then ${\rm Int}\ M$ denotes the interior of $M$ and $\partial\, M$ denotes the boundary of $M$.  We let $\RR^3$ denote Euclidean 3 -space. A \emph{disk with holes} is a compact, connected planar 2-manifold with boundary.  A properly embedded disk with holes $H$ in a solid torus $T$ is said to be \emph{interior-inessential} if the inclusion map on $\partial H$ can be extended to a map of $H$ into $\partial\,T$. If the inclusion map on $\partial H $ cannot be extended to a map of $H$ into $\partial\, T$ we say that $H$ is \emph{interior-essential}  \cite{Dav07}, \cite[p. 170]{DV09}.  If $H$ is interior-essential, we also say $H$ is a \emph{meridional disk with holes} for the solid torus $T$. See Figure \ref{diskwHolesFig} for an illustration of a meridional disk with holes with  five boundary components.

For background on contractible open 3-manifolds, see \cite{McM62, Mye88, Mye99a, Wri92}.

\begin{definition} 
A \emph{Whitehead Link} is a pair of solid tori $T' \subset {\rm Int}\ T$  so that $T'$  is contained in ${\rm Int}\ T$ as illustrated in  Figure \ref{GabaiLinksFig:Whitehead}.
\end{definition}

\begin{figure}[ht!]
\begin{center}
  \subfigure[Whitehead Link]%
    {%
    \label{GabaiLinksFig:Whitehead}
    \includegraphics[width=0.48\textwidth]{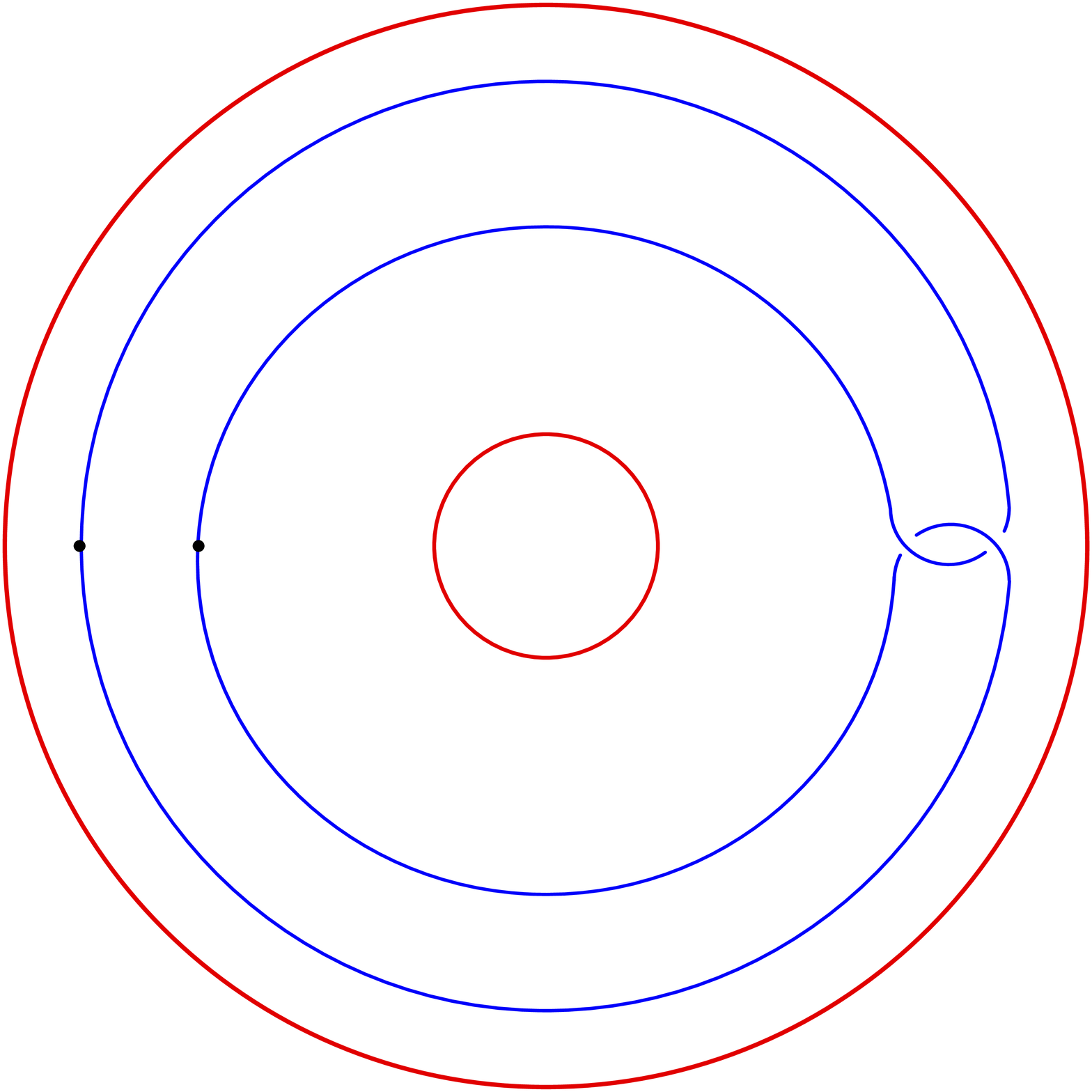}
    }%
  \subfigure[Gabai Link: Order 2]%
    {%
    \label{GabaiLinksFig:Order2}
    \includegraphics[width=0.48\textwidth]{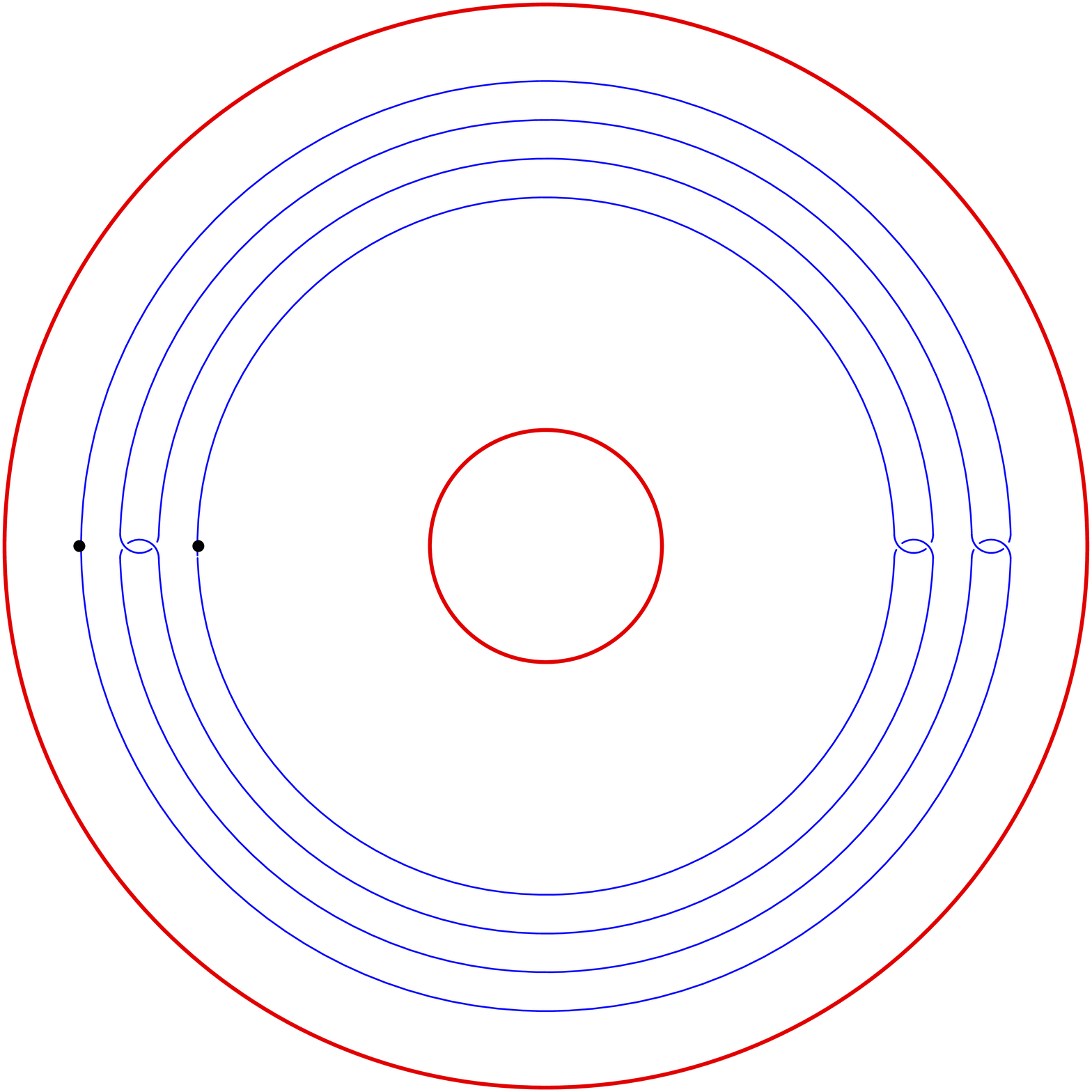}
    }\\%  
  %------ End of the first row ----------------------%
  \subfigure[Gabai Link: Order 3]%
    {%
    \label{GabaiLinksFig:Order3}
    \includegraphics[width=0.48\textwidth]{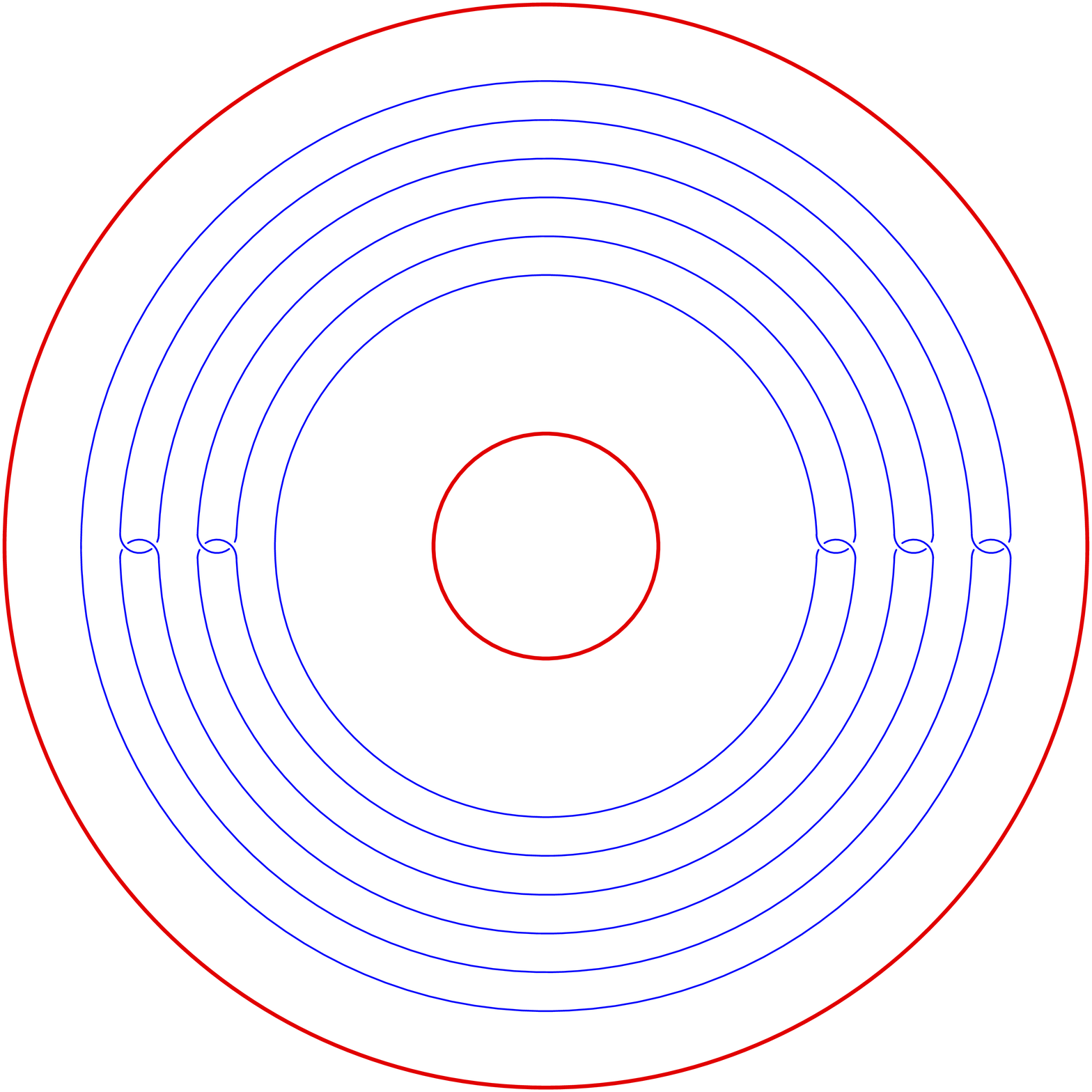}
    }%
  \subfigure[Gabai Link: Order $n$]%
    {%
    \label{GabaiLinksFig:Ordern}
    \includegraphics[width=0.48\textwidth]{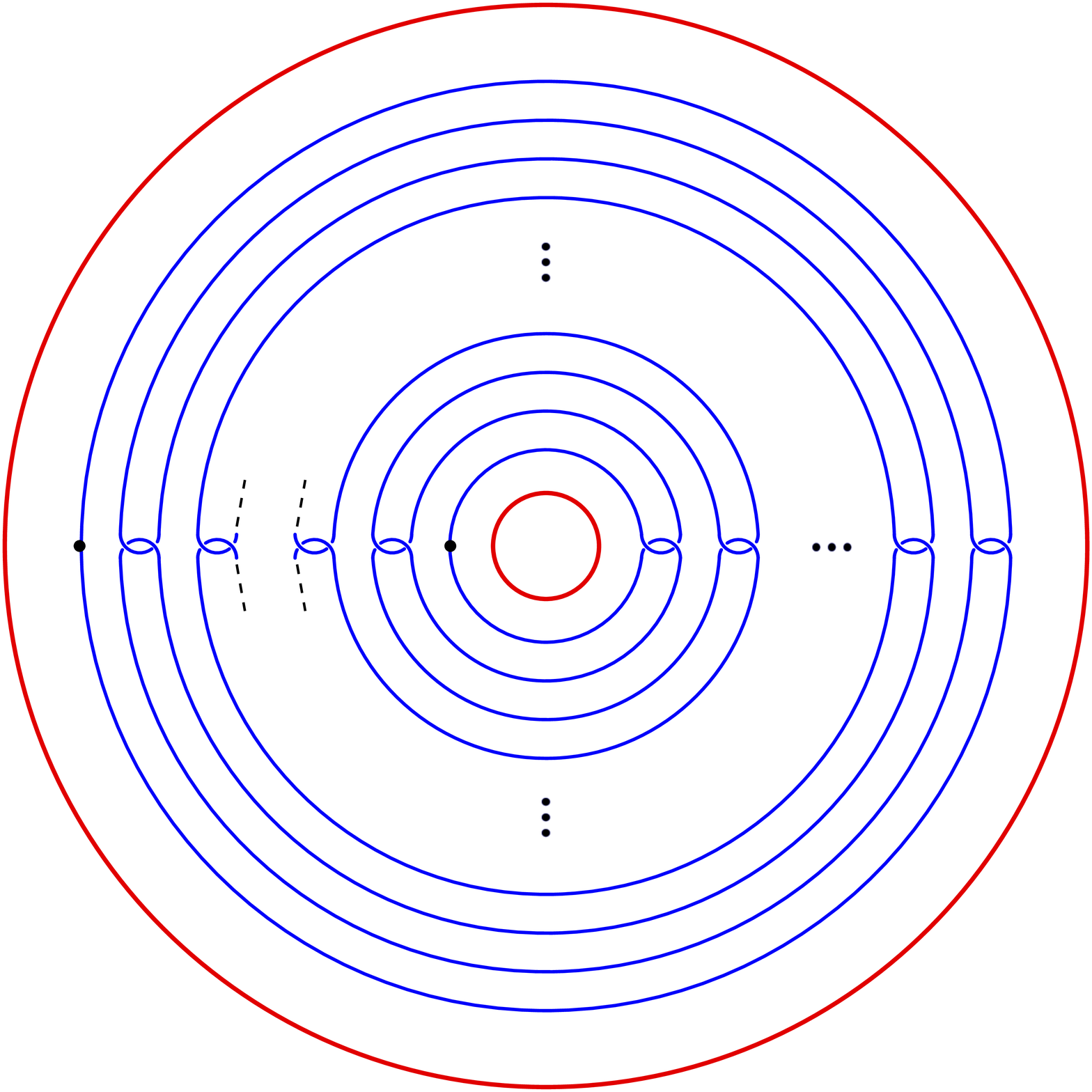}
    }%
  \end{center}
  \caption{%
    Whitehead and Gabai Links}%
  \label{GabaiLinksFig}
\end{figure}

The famous Whitehead contractible 3-manifold  \cite{Whi35}  is a  3-manifold  that is the ascending union of nested solid tori $T_i,i\geq 0,$ so that for each $i$, $T_i \subset {\rm Int}\ T_{i+1}$ is a Whitehead Link.  

\begin{definition} \label{geometric}
If $T' \subset {\rm Int}\ T$ are solid tori, the \emph{geometric index} of $T'$ in $T$, $N(T',T)$, is the minimal number of points of the intersection of the centerline of $T'$ with a meridional disk of $T$. 
\end{definition}     

See Schubert \cite{Sch53} and \cite{GRWZ11} for the following  results about geometric index.
\begin{itemize}
\item Let $T_0$ and $T_1$ be unknotted solid tori in $S^{3}$ with  $T_0 \subset {\rm Int}\ T_1$ and $N( T_0, T_1) = 1$.  Then $\partial\, T_0$ and  $\partial\, T_1$ are parallel.
\item   Let $T_0, T_1$, and $T_2$ be solid tori so that $T_0 \subset {\rm Int}\ T_1$ and $T_1 \subset {\rm Int}\ T_2$.  Then $N(T_0, T_2) =  N(T_0, T_1) \cdot  N(T_1, T_2)$.
\item If  $T_0$ and $T_1$ are unknotted solid tori in $S^{3}$ with  $T_0 \subset {\rm Int}\ T_1$, and if $T_0$ is contractible in $T_1$, then $N( T_0, T_1)$ is even.
\end{itemize}

\begin{remark}\label{WH-index} 
If $T' \subset {\rm Int}\ T$ is a Whitehead Link, then the geometric index of $T'$ in $T$ is 2. This follows since the geometric index is 0 or 2 by the third bullet above. It cannot be 0 by \cite[Section 3.G.4]{Rol90}. 
\end{remark}

We now define a generalization of the Whitehead Link (which  has geometric index 2) to a \emph{Gabai Link} that will be shown to have geometric index $2n$ for some positive integer $n$. We will use this generalization in Section \ref{GabaiSec} to produce our examples of 3-manifolds that have the double 3-space property.

\begin{definition}\label{GabaiLinkDef}
  Let $n$ be a positive integer.  A \emph{Gabai Link} of order  $n$ is a pair of solid tori  $T' \subset {\rm Int}\ T$ as illustrated in Figure  \ref{GabaiLinksFig}. Figure \ref{GabaiLinksFig:Order2}  shows a Gabai Link of order 2, Figure  \ref{GabaiLinksFig:Order3} shows a Gabai Link of order 3, and Figure  \ref{GabaiLinksFig:Ordern} shows a generalized Gabai Link of order $n$.  For the link of  order $n$, there are $n-1$ clasps on the left and $n$ clasps on the right.
\end{definition}

\textbf{Note}: In a Gabai Link of order $n$ the inner torus has geometric index $2n$ in the outer torus. See  Lemma \ref{G index 3} in the Appendix for details on this.

Note that the inner torus $T'$ in a Gabai Link is contractible in the outer torus $T$.

\begin{definition}  A \emph{genus one 3-manifold} $M$ is the ascending union of solid tori $T_i, i\geq 0$, so that for each $i$,  $T_i \subset {\rm Int}\  T_{i+1} $ and the geometric index of $T_i$ in $T_{i+1}$ is not equal to $0$.
\end{definition}

\begin{theorem}  
If $M$ is a genus one 3-manifold with defining sequence $(T_i)$, then, for each j,  $T_j$ does not lie in any open subset of $M$ that is homeomorphic to $\mathbb R^3$.
\label{GenusOneTheorem}
\end{theorem}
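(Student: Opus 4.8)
The plan is to derive a contradiction from the two facts about geometric index recalled in Section~\ref{RefSec}: its multiplicativity for nested solid tori, and the fact that a centerline contained in a $3$-ball in the interior of a solid torus has geometric index $0$ there.

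Suppose, for some $j$, that $T_j$ were contained in an open set $U\subseteq M$ with $U$ homeomorphic to $\mathbb R^3$ (which, by Moise's theorem, we may take to be a PL homeomorphism). Since $T_j$ is compact, it lies in the interior of a PL $3$-ball $B$ with $B\subseteq U$. The set $B$ is compact and $M=\bigcup_i\operatorname{Int} T_i$ is an ascending union, so $B\subseteq\operatorname{Int} T_k$ for some $k>j$. In particular the centerline $C_j$ of $T_j$ lies in the $3$-ball $B$, and $B\subseteq\operatorname{Int} T_k$.

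From this I would conclude $N(T_j,T_k)=0$, as follows. Pick a meridional disk $D$ of $T_k$ transverse to $\partial B$; using that solid tori are irreducible, remove the circles of $D\cap\partial B$ one at a time by the standard innermost-disk swap, each swap replacing $D$ by an isotopic meridional disk. When $D\cap\partial B=\varnothing$, the disk $D$ is connected and $\partial D\subseteq\partial T_k$ is disjoint from $B$, so $D\cap B=\varnothing$; thus this meridional disk misses $C_j\subseteq B$, and $N(T_j,T_k)=0$. (Equivalently this is the ``only if'' direction of the characterization $N(T',T)=0\iff T'$ lies in a $3$-cell in $\operatorname{Int} T$; see \cite{Sch53,GRWZ11}.) On the other hand, iterating multiplicativity yields
\[
N(T_j,T_k)=N(T_j,T_{j+1})\cdot N(T_{j+1},T_{j+2})\cdots N(T_{k-1},T_k),
\]
and each factor is nonzero by the definition of a genus one $3$-manifold, so $N(T_j,T_k)\neq 0$ --- the desired contradiction.

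The one step that needs care is the implication ``$C_j$ lies in a $3$-ball in $\operatorname{Int} T_k$'' $\Rightarrow$ ``$N(T_j,T_k)=0$'': one has to check that the disk surgeries keep $D$ a meridional disk and that the final disk really does avoid the ball $B$. The rest is just compactness together with the index identities quoted above.
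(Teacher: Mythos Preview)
Your argument is correct and follows the same route as the paper's proof: put $T_j$ in a $3$-ball $B\subset U$, then use compactness to find $k>j$ with $B\subset\operatorname{Int} T_k$, conclude $N(T_j,T_k)=0$, and contradict multiplicativity. The paper simply asserts the step ``$T_j$ lies in a $3$-ball in $\operatorname{Int} T_k$ $\Rightarrow$ $N(T_j,T_k)=0$'' without further comment, whereas you supply the standard innermost-disk justification (and correctly note it as the only step requiring care); the invocation of Moise and of irreducibility are reasonable extra hygiene that the paper, working in the PL category throughout, leaves implicit.
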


{\sc Proof.}  
If $T_j$ lies in $U$ so that $U$ is homeomorphic to $\mathbb R^3$, then, since $T_j$ is compact, it lies in a 3-ball $B \subset U$.  Since $B$ is compact, it lies in the interior of some $T_k$ with $k > j$.  This implies that the geometric index of $T_j$ in $T_k$ is 0, but since the geometric index is multiplicative, the geometric index of $T_j$ in $T_k$ is not zero.  So there is no such $U$.
\qed

\begin{theorem} \label{essential} 
If $M$ is a genus one 3-manifold with defining sequence $(T_i)$, and $J$ is an essential simple closed curve that lies in some $T_j$, then $J$ does not lie in any open subset of $M$ that is homeomorphic to $\mathbb R^3$.
\end{theorem}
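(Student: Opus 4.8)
The plan is to run the same argument as in the proof of Theorem~\ref{GenusOneTheorem}, but with a thin solid-torus neighborhood $T_J$ of $J$ playing the role of $T_j$. The only extra input needed is the observation that, because $J$ is essential in $T_j$, the geometric index $N(T_J,T_j)$ is at least $1$.

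Concretely, I would first arrange, by a small isotopy (which does not change the fact that $J$ is not null-homotopic in $T_j$), that $J\subset \Int T_j$, and choose a regular neighborhood $T_J$ of $J$ with $T_J\subset \Int T_j$; its core is then isotopic in $T_j$ to $J$. Since $J$ is essential in the solid torus $T_j$, it represents a nonzero element of $\pi_1(T_j)\cong\mathbb Z$, so its winding number $w$ in $T_j$ is nonzero, and hence every meridional disk of $T_j$ meets the core of $T_J$ in at least $|w|\ge 1$ points (the algebraic intersection number is $\pm w$); thus $N(T_J,T_j)\ge 1$. Now suppose for contradiction that $J\subset U$ with $U$ an open subset of $M$ homeomorphic to $\RR^3$. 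As $J$ is compact there is a PL $3$-ball $B$ with $J\subset \Int B\subset B\subset U$, and as $B$ is compact there is an index $k>j$ with $B\subset \Int T_k$; shrinking $T_J$ if necessary we may assume $T_J\subset \Int B$. Since $T_J\subset \Int T_j\subset \Int T_{j+1}\subset\cdots\subset \Int T_k$, iterated multiplicativity of the geometric index gives
\[
N(T_J,T_k)\;=\;N(T_J,T_j)\cdot\prod_{i=j}^{k-1}N(T_i,T_{i+1}),
\]
and every factor on the right is a positive integer --- the factors $N(T_i,T_{i+1})$ by the definition of a genus one $3$-manifold, and $N(T_J,T_j)$ by the preceding step --- so $N(T_J,T_k)\ne 0$. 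On the other hand $T_J$ lies in the $3$-ball $B$ with $B\subset \Int T_k$, so exactly as in the proof of Theorem~\ref{GenusOneTheorem} (ambiently shrink $B$, hence $T_J$, into a small ball of $T_k$ that misses a meridional disk) one gets $N(T_J,T_k)=0$, a contradiction.

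I expect the main point to be the elementary fact that an essential simple closed curve in a solid torus has a regular neighborhood of positive geometric index; once that is in hand, everything else is the multiplicativity bookkeeping of Theorem~\ref{GenusOneTheorem} applied to $T_J$ in place of $T_j$. The reduction to $J\subset \Int T_j$ is a routine technicality, needed only so that the nesting chain $T_J\subset \Int T_j\subset\cdots\subset \Int T_k$ makes sense.
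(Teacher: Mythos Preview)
Your argument is correct and is essentially the paper's own proof: take a solid-torus regular neighborhood $T_J$ of $J$ inside $\Int T_j$, use essentiality of $J$ to get $N(T_J,T_j)\ge 1$, and then apply the geometric-index argument of Theorem~\ref{GenusOneTheorem} to the chain $T_J\subset T_j\subset T_{j+1}\subset\cdots$. The only cosmetic difference is that the paper phrases this as a direct appeal to Theorem~\ref{GenusOneTheorem} (observing that $T_J,T_j,T_{j+1},\dots$ is itself a defining sequence for $M$), whereas you inline that argument; your winding-number justification of $N(T_J,T_j)\ge 1$ is a bit more explicit than what the paper writes.
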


{\sc Proof.}  
By thickening up $T_j$ we may assume, without loss of generality, that $J$ is the centerline of a solid torus $T$ that lies in  ${\rm Int}\ T_j$. Since $J$ is essential in $T_j$,  the geometric index of $T$ in $T_j$ is not equal to zero.  Thus, $M$ is the ascending union of tori $T, T_j, T_{j+1}, T_{j+2}, \cdots$ and by the previous theorem, $T$ does not lie in any open subset of $M$ that is homeomorphic to $\mathbb R^3$.  If $J$  lies in $U$ so that $U$ is homeomorphic to $\mathbb R^3$, then we could have chosen $T$ so that it also lies in $U$.  Thus,  by Theorem \ref{GenusOneTheorem}, $J$ does not lie in any open subset of $M$ that is homeomorphic to $\mathbb R^3$.
\qed

\begin{theorem}  
A genus one 3-manifold $M$ with defining sequence $(T_i)$ so that each $T_i$ is contractible in $T_{i+1}$,  is a contractible 3-manifold that is not homeomorphic to $\mathbb R^3$.
\end{theorem}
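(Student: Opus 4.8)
The plan is to verify the two assertions---contractibility and non-homeomorphism to $\mathbb R^3$---separately, since they draw on quite different tools. For contractibility, I would first note that $M = \bigcup_i T_i$ is an open 3-manifold (an ascending union of the open sets $\operatorname{Int} T_{i+1}$, each a submanifold of the next). To see that $M$ is contractible, I would use the fact that $M$ is the direct limit of the solid tori $T_i$ under inclusion, so $\pi_k(M) = \varinjlim \pi_k(T_i)$ for every $k$. Each $T_i$ is a solid torus, hence a $K(\mathbb Z,1)$; the inclusion-induced map $\pi_1(T_i) \to \pi_1(T_{i+1})$ sends the generator to the class of the centerline of $T_i$ in $T_{i+1}$. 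Because $T_i$ is contractible in $T_{i+1}$ by hypothesis, this generator maps to the trivial element, so every bonding map on $\pi_1$ is zero and therefore $\pi_1(M) = \varinjlim \pi_1(T_i) = 0$. For $k \geq 2$, $\pi_k(T_i) = 0$, so $\pi_k(M) = 0$ as well. Thus $M$ is a simply connected (indeed weakly contractible) open 3-manifold, and by Whitehead's theorem together with the fact that open 3-manifolds have the homotopy type of CW complexes, $M$ is contractible.

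For the claim that $M$ is not homeomorphic to $\mathbb R^3$, I would argue by contradiction and invoke Theorem \ref{GenusOneTheorem}. The hypothesis that each $T_i$ is contractible in $T_{i+1}$ forces the geometric index $N(T_i, T_{i+1})$ to be even (by the third bullet on geometric index, once one checks the tori may be taken unknotted in $S^3$ at each stage); in particular it is nonzero, so $M$ is a genus one 3-manifold in the sense of the preceding definition, and $(T_i)$ is a valid defining sequence. If $M$ were homeomorphic to $\mathbb R^3$, then taking $U = M$ itself, Theorem \ref{GenusOneTheorem} would say that $T_0$ does not lie in any open subset of $M$ homeomorphic to $\mathbb R^3$---but $T_0 \subset M = U$, a contradiction. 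Hence $M \not\cong \mathbb R^3$.

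The main obstacle, and the only place where real care is needed, is the step asserting that the hypothesis ``$T_i$ contractible in $T_{i+1}$'' actually makes $M$ a \emph{genus one} 3-manifold, i.e.\ that $N(T_i, T_{i+1}) \neq 0$. One must rule out the degenerate possibility that some $T_i$ bounds a ball in $T_{i+1}$ (geometric index $0$). This is handled exactly as in Remark \ref{WH-index}: if $T_i$ were null-homotopic in $T_{i+1}$ with geometric index $0$, the meridian of $T_{i+1}$ would be unaffected and one gets a contradiction with the non-triviality coming from the link structure; for the links actually used in the paper (Whitehead and Gabai links) the geometric index is computed explicitly to be $2$ or $2n$. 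So in the generality stated, one should either add the standing assumption that the index is positive or appeal to the specific links; in the intended application the index is $2n \geq 2$ and the point is moot. Once that is secured, both halves of the conclusion follow immediately from the cited results, and no further computation is required.
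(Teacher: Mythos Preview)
Your argument for both halves matches the paper's: contractibility via vanishing homotopy groups and Whitehead's theorem (you simply spell out the direct-limit computation more explicitly than the paper does), and non-homeomorphism to $\mathbb R^3$ by direct appeal to Theorem~\ref{GenusOneTheorem}.

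Your final paragraph, however, rests on a misreading of the hypothesis. The statement already assumes that $M$ is a \emph{genus one 3-manifold}, which by the definition immediately preceding Theorem~\ref{GenusOneTheorem} requires $N(T_i,T_{i+1})\neq 0$ for every $i$. The clause ``each $T_i$ is contractible in $T_{i+1}$'' is an \emph{additional} hypothesis layered on top of the genus one condition (it is what supplies contractibility of $M$), not a replacement for it. So there is no obstacle to overcome, no need to invoke evenness of the index or unknottedness in $S^3$, and no need to restrict to the specific Whitehead or Gabai links; the nonzero-index condition is simply given.
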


{\sc Proof.}  
$M$ is contractible since all the homotopy groups are trivial. 
This is a consequence of the Whitehead Theorem. See for example \cite[Section 0.9]{DV09}.

If $M$ is homeomorphic to $\mathbb R^3$, then each $T_i$ in the defining sequence lies in an open subset that is homeomorphic to $\mathbb R^3$ which is a contradiction by Theorem \ref{GenusOneTheorem}.
\qed

\begin{definition}  
A 3-manifold is said to satisfy  the \emph{double 3-space property} if it is the union of two open sets $U$ and $V$ so that each of $U$, $V$, and $U \cap V$ is homeomorphic to $\RR^3$.
\end{definition}

\section{Gabai Manifolds Satisfy the Double 3-space Property}\label{GabaiSec}
%%%%%%%%%%%%%%%%%%%%%%%%%%%%%%%%
\subsection {Gabai Manifolds}
%%%%%%%%%%%%%%%%%%%%%%%%%%%%%%%%
Refer to Definition \ref{GabaiLinkDef} and Figure \ref{GabaiLinksFig} for the definition of a Gabai Link.

\begin{definition}\label{GabaiManifoldDef}
A \emph{Gabai contractible 3-manifold} is the ascending union of nested solid tori so that any two consecutive tori form a Gabai Link.  Given a sequence $n_1, n_2, n_3, \dots$ of  positive integers, there is a Gabai contractible 3-manifold $G = \ds\bigcup_{m = 0}^\infty T_m$ so that the tori $T_{m-1}
\subset {\rm Int}\  T_m$ form a Gabai Link of  order $n_m$.  
\end{definition}

In fact, it is possible to assume that each $T_m \subset \mathbb R^3$ because if a Gabai Link is embedded in $\RR^3$ so that the larger solid torus is unknotted, then the smaller solid torus is also unknotted.  McMillan's proof \cite{McM62}  that there are uncountably many genus one contractible 3-manifolds transfers immediately to show that there are uncountably many Gabai contractible 3-manifolds. This proof uses properties of geometric index to show that if a prime $p$ is a factor of infinitely many of $n_1, n_2, n_3, \dots$ and only finitely many of $m_1, m_2, m_3, \dots$, then the two 3-manifolds formed using these sequences cannot be homeomorphic.

\subsection{Special Subsets of $S^1$ and $B^2\times S^1$}
\label{SpecialSubsets}
%%%%%%%%%%%%%%%%%%%%%%%%%%%%%%%%
In $S^1$ choose a closed interval $I$ which we identify with the closed interval $[0,1]$.  Let $C \subset I\subset S^1$ be the standard middle thirds Cantor set.  Let $U_1 = ({1 \over 3}, {2 \over 3})$, $U_2 = ({1 \over 9}, {2 \over 9}) \cup ({7 \over 9}, {8 \over 9}) $, and, in general, $U_i$ be the union of the $2^{i-1}$ components of $[0,1] - C$ that have length $1/3^i$.  Let $U_0 = S^1 - [0,1]$, $C_1 = C \cap [0, {1 \over 3}]$, and $C_2  = C \cap [{2 \over 3},1]$.

Let $h:B^2\times S^1 \to \RR^3$ be an embedding so that  $T=h( B^2\times S^1)$ is a standard unknotted solid torus in $\RR^3$. Set $V^i = h(B^2 \times U_i$),   $A = h(B^2 \times C_1)$, and  $B = h(B^2 \times C_2)$.  So  $V^i$ (for $i \ge 0$), $A$, and $B$ are all subsets of $T$.  The subset $V^0$ is homeomorphic to $B^2 \times (0,1)$.  For $i >0$,   $V^i $ is homeomorphic to the disjoint union of $2^{i-1}$ copies of $B^2 \times (0,1)$, and both $A$ and $B$ are homeomorphic to $B^2 \times C$.

For each positive integer $n$, let $g_n$ be a homeomorphism of $\RR^3$ to $\RR^3$ that takes $T$ into its interior, so that the pair $(g_n(T), T)$ forms a Gabai Link of geometric index $2n$. Let $T^{\,\prime}_n=g_n(T)$, $A^{\,\prime}_n=g_n(A)$,  $B^{\,\prime}_n=g_n(B)$, and ${V^{i}_n}^{\,\,\prime}=g_n(V^i)$.

\begin{lemma}\label{SetupLemma}
The homeomorphisms $g_n:\RR^3\rightarrow \RR^3$ can be chosen  so that:
\begin{subequations}\label{SetupConditions}
\begin{eqnarray}
&A \cap T^{\,\prime}_n = A^{\,\prime}_n \text{ and } B \cap T^{\,\prime}_n = B^{\,\prime}_n \label{SetupConditions1}\\
&{V^{0}_n}^{\,\prime} \subset V^0, \text{ and }\label{SetupConditions2}\\
&
\text{ for } i>0, \text{ and for each component } X \text { of } {V^{i}_n}^{\,\prime}
 \text{ there is }  j <i \text{ such that } X\subset V^j
\label{SetupConditions3}
\end{eqnarray}
\end{subequations} 
\end{lemma}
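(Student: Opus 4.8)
The plan is to transport everything into the model solid torus $B^2\times S^1$ via $h$ and to position the Gabai Link there ``fibered'' over a fold map of the $S^1$-factor tailored to the Cantor decomposition of $S^1$. Writing $F=h^{-1}\circ g_n\circ h$, the three assertions become: we may choose a PL embedding $F:B^2\times S^1\hookrightarrow{\rm Int}(B^2\times S^1)$ whose image is an order-$n$ Gabai Link and which satisfies $(B^2\times C_k)\cap F(B^2\times S^1)=F(B^2\times C_k)$ for $k=1,2$, satisfies $F(B^2\times U_0)\subseteq B^2\times U_0$, and, for each $i>0$, has every component of $F(B^2\times U_i)$ contained in $B^2\times U_j$ for some $j<i$. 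Any such $F$, after composing if necessary with a fiber-preserving twist of $B^2\times S^1$ (which disturbs none of these conditions, since it preserves each fiber $B^2\times\{t\}$), is induced by a homeomorphism $g_n$ of $\RR^3$ carrying $T$ into ${\rm Int}\,T$, because $T$ and $F(T)$ are both unknotted; that $g_n$ then realizes the order-$n$ Gabai Link, whose geometric index is $2n$ by Lemma \ref{G index 3}.

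First I would construct a PL map $\phi:S^1\to S^1$ with these features: $\phi$ is surjective of degree $0$; it carries $\overline{U_0}$ homeomorphically onto $\overline{U_0}$ and carries $\overline{U_1}$ homeomorphically onto $\overline{U_0}$; and on $[0,1/3]$, and symmetrically on $[2/3,1]$, it is a ``$2n$-sheeted fold map with detours'' — the interval $[0,1/3]$ is subdivided into $2n$ closed ``sheet'' subintervals, on each of which $\phi$ is a homeomorphism onto $[0,1/3]$ carrying the corresponding clopen sub-Cantor-piece of $C_1$ onto $C_1$, these being separated by $2n-1$ short ``detour'' subintervals on each of which $\phi$ has a single fold whose values lie in $U_1$ or in $U_0$ and avoid $C_1$. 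Alternating the orientations of the sheets forces the detours to alternate between $U_1$ and $U_0$, yielding $n$ detours into $U_1$ and $n-1$ into $U_0$; choosing the fold heights nested makes $\phi$ exactly $2n$-to-one. By construction $\phi(C_1)\subseteq C_1$, $\phi^{-1}(C_1)\cap I=C_1$ (and likewise for $C_2$), $\phi(U_0)=U_0$, and every fold of $\phi$ is a detour fold, hence lies over $U_0\cup U_1$ and not over $C_1\cup C_2$ (the seam values $0,1/3,2/3,1$ lie in $C_1\cup C_2$ but are regular values, $\phi$ passing monotonically through them). Moreover $\phi$ sends each component of $U_i$ $(i>0)$ into a single component of a strictly lower $U_j$: the component of $U_1$ maps to $U_0$; a detour gap, which sits at level $\ge 2$, maps into $U_1$ or $U_0$; and a gap interior to a sheet maps, by the self-similarity of $C_1$, to a shallower gap of $C_1$.

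I would then lift $\phi$ to the embedding $F$: over the complement of small neighborhoods of the finitely many fold points of $\phi$, put $F(b,t)=(\rho(b,t),\phi(t))$ with $\rho$ routing the $2n$ sheets through $2n$ pairwise-disjoint subdisks of $B^2$; near each fold point let $F$ execute the corresponding U-turn, choosing $\rho$ and these U-turns so that the image link in $B^2\times S^1$ carries exactly the clasp pattern of the order-$n$ Gabai Link — the $n$ detours into $U_1$ producing the $n$ right clasps and the $n-1$ detours into $U_0$ the $n-1$ left clasps. Conditions \eqref{SetupConditions1}--\eqref{SetupConditions3} then follow from the product structure together with the fact that no fold of $\phi$ lies over $C_1\cup C_2$: for instance, if a point of $B^2\times C_1$ lies in the image of $F$, its $F$-preimage lies over a regular value of $\phi$, hence over a point $t'$ with $\phi(t')\in C_1$; by \eqref{SetupConditions2}, $t'\notin U_0$, so $t'\in\phi^{-1}(C_1)\cap I=C_1$, whence $A\cap T'_n\subseteq A'_n$, while $A'_n\subseteq A\cap T'_n$ is immediate from $\phi(C_1)\subseteq C_1$; \eqref{SetupConditions2} holds since $\phi(U_0)=U_0$ and $\phi$ has no fold over $U_0$ in the domain; and \eqref{SetupConditions3} is the gap-to-gap behavior already recorded. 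The genuinely hard point is this last step — arranging $\phi$ and its lift so that one really obtains the order-$n$ Gabai pattern, and not merely some degree-$0$, index-$2n$ link, while simultaneously keeping every fold off $C_1\cup C_2$, keeping $\overline{U_0}$ invariant, and respecting the gap hierarchy; in practice this is most convincingly carried out by drawing the Gabai Link directly in this fibered, Cantor-compatible position (the case $n=1$ being essentially Gabai's original picture).
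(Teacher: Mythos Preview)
Your approach is correct and is essentially the paper's own: both build $g_n$ fibered over a fold map of the $S^1$-factor tailored to the Cantor structure, use the self-similarity of $C$ to force gap-levels to drop, and appeal to a picture for the final identification with the order-$n$ Gabai pattern. The only cosmetic difference is that the paper specifies the $4n$ domain tubes concretely (at Cantor stages $m$ and $m+1$ with $2^m+2k=4n$) and stretches each linearly onto $[0,1/3]$ or $[2/3,1]$, whereas you package the same data as the fold map $\phi$; the paper's explicit linearity is what makes your ``self-similarity'' step go through, so you are implicitly using it.
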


{\sc Proof.}  
Fix a positive integer $n$. We first define $g_n$ on $T$. The idea is to identify $4n$ subsets of $T=B^2\times S^1$, each homeomorphic to a tube of the form an interval cross $B^2$,  and to use the $S^1$ coordinate to linearly (in the $S^1-U_0=I$ factor) stretch these tubes from the region $V^0$ to the region $V^1$ in $T$.
 
Choose a positive integer $m$ and a nonnegative integer $k<2^{m-1}$ so that 
$2^{m}+2k=4n<2^{m+1}$. Note that $m$ and $k$ are determined by $n$. Remove the subsets $U_1,\ldots U_m$ from $I$ so that $2^{m}$ intervals of length $3^{-m}$ remain. Then remove $2k$ of the intervals in $U_{m+1}$, namely the middle third of the first $k$ and the last $k$ of these remaining intervals in $I$ so that $4n=2^{m}+2k$ intervals remain, $4k$ of length $3^{-(m+1)}$, 
and the remaining $2^{m}-2k$ of length $3^{-m}$. Let $\widetilde{U}_{m+1}$ be the union of the intervals in $U_{m+1}$ that have been removed. Figure \ref{CantorLabelsFig} shows the case where $n=3, m=3, \text{ and }k=2$. 
The integers $i$ across the bottom of this figure then correspond to  $U_{1},\, U_{2},\, U_3,\, $ and $\widetilde{U}_{4}$ defined above.

\begin{center}
\begin{figure}[ht]
\includegraphics[width=0.85\textwidth]{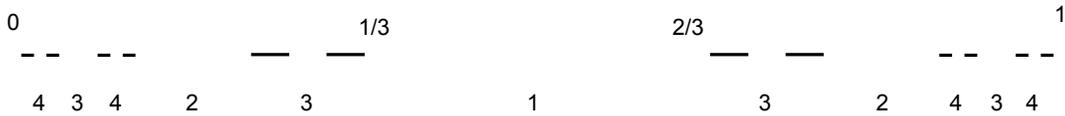}
\caption{%
Labelled Removed Intervals in $[0,1]$     }%
\label{CantorLabelsFig}
\end{figure}
    \end{center}
    
Now let $\widetilde{V}_{m+1}=h(B^2 \times \widetilde{U}_{m+1})$ and consider $W=T-\cup_{j=0}^m V^j - 
\widetilde{V}_{m+1}$. Then $W$ consists of $4n$ tubes homeomorphic to $[0,1]\times B^2$.
Let $g_n$ be a homeomorphism of $T$ into its interior so that:
\begin{enumerate}
\item The pair $(g_n(T), T)$ forms a Gabai Link of geometric index $2n$,
\item  $g_n(V^0\cup V^1)\subset V^0$, 
\item The  components of 
$\cup_{j=2}^m V^j \cup  \widetilde{V}_{m+1}$ are taken by $g_n$ into $V_0 \cup V_1$, and 
\item $g_n$ restricted to each of the $4n$ tubes mentioned above is a product of a homeomorphism of the $B^2$ factor onto a subdisk with a linear homeomorphism on the interval factor that  stretches the tube from $V^0$ to $V^1$ or from $V^1$ to $V^0$ in either $B^2\times[0, 1/3]$ or in $B^2\times[2/3, 1]$.  
\end{enumerate}

Figure \ref{ToriLabelsFig} illustrates this when $n=3$, with the numbers $j$ listed by parts of the interior torus corresponding to the subsets $g_n(V^j)$. The last two regions mentioned in (4) above correspond to the top or bottom components of  $T-(V_0\cup V_1)$ in Figure  \ref{ToriLabelsFig}. In particular, the $S^1$ factor, after $U_0$ is removed, is parameterized in a counterclockwise manner in Figure  \ref{ToriLabelsFig}.

\begin{center}
\begin{figure}[ht]
 \includegraphics[width=0.59\textwidth]{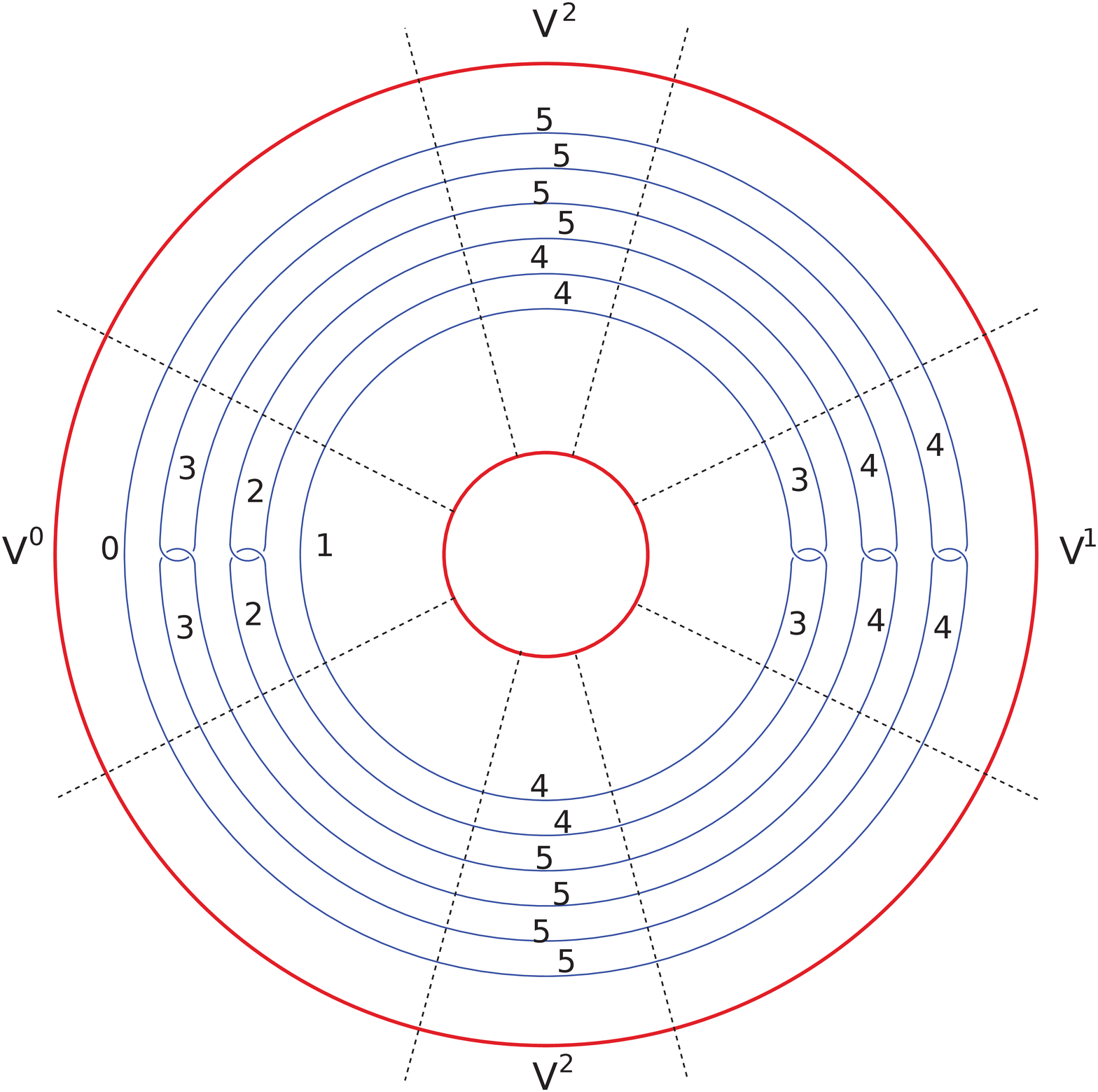}
\caption{%
Labelled Regions on Tori in Gabai Link     }%
\label{ToriLabelsFig}
\end{figure}
    \end{center}

The idea here is that each of the $4n$ tubes of $W$  corresponds in the $S^1$ factor to a remaining interval at stage $m$ or $m+1$ of the Cantor set construction. Stretching each tube out linearly (in the $S^1$ factor) makes the middle third of the interval in the $S^1$ factor match up exactly with 
one of the middle thirds labeled $V^2$ in $T$. The self similarity under linear scaling of the Cantor set also makes further stages of the Cantor set in the $S^1$ factor of the tube match up with further stages of the Cantor set in the $S^1$ factor of $T$.

The interval factor of each of the tubes in $W$ corresponds to an interval in $I$ of length $3^{-m}$ or of length $3^{-(m+1)}$, one of the remaining intervals in stage $m$ or stage $m+1$  of the standard construction of $C$. Let $D$ be one of these intervals. The self similarity of $C$ shows that a linear homeomorphism from $D$ onto either $E=[0,1/3]$ or onto $E=[2/3,1]$ takes $C\cap D$ onto $C\cap E$ and takes the intervals of $U_i\cap D$ homeomorphically to the intervals of $U_{i-\ell}\cap E$ where $\ell=m-1$ or $\ell=m$.

From this, it follows that condition (\ref{SetupConditions2}) is satisfied. The nature of a Gabai Link guarantees that $A_n^{\,\prime}\subset B^2\times[0,1/3]\subset T$ and that $B_n^{\,\prime}\subset B^2\times[2/3,1]\subset T$. This, together with the discussion in the previous paragraph shows that condition (\ref{SetupConditions1}) is satisfied.

Since both $T$ and $T'=g_n(T)$ are unknotted solid tori, the map $g_n$ extends to a homeomorphism of $\RR^3$ if and only if $g_n$ takes a longitudinal curve of $T$ to a
longitudinal curve of $T'$.
By a \emph{longitudinal curve} we mean a simple closed curve in the boundary of $T$ (respectively $T^\prime$) that is ambiently isotopic in $T$ (respectively $T^\prime$) to $S^1\times 
 \{ p\}$ in $S^1\times B^2=T (\text{ respectively } T^\prime)$ where $p$ is a point in the boundary of $B^2$.
If this is not the case, we can first take a twisting
homeomorphism of $T$ to itself that preserves the subsets $A, B$, and $V^i$ of $T$ so
that the compositions of the twisting homeomorphism and our $g_n$ takes a longitudinal
curve of $T$ to a longitudinal curve of $T'$. Thus we may assume that $g_n$ extends to a
homeomorphism of $\RR^3$ to itself. 
\qed

\subsection{Construction}
%%%%%%%%%%%%%%%%%%%%%%%%%%%%%%%%
We will now inductively construct a Gabai 3-manifold corresponding to a sequence $n_1, n_2, n_3, \dots$ of  positive integers, with special subsets corresponding to the subsets of $T$ 
and $T_n^{\prime}$ just described.  Let $T_0=T$. Let $h_1:T\rightarrow R^3$ be given by 
$g_{n_1}^{-1}$ and let $T_1=h_1(T)$. Let $A_1=h_1(A)$,  $B_1=h_1(B)$, and $V^{i}_1=h_1(V^i)$. Note that the pair $(T_0,T_1)$ is homeomorphic to $(T^{\prime}_{n_1},T)$ via $g_{n_1}$and so forms a Gabai Link of  order $n_1$. It follows immediately  from Lemma \ref{SetupLemma} and the definitions of the various subsets that:

\begin{subequations}\label{Case1}
\begin{eqnarray}
&A_1 \cap T_0 = A \text{ and }B_1 \cap T_0 = B \label{Case1:ab}\\
&V^{0} \subset V_{1}^0 
\text{ and for } i>0,  V^{i} \subset  \cup_{j<i}V_{1}^j
\label{Case1:cd}
\end{eqnarray}
\end{subequations} 

Inductively assume that homeomorphisms $h_i:T\rightarrow R^3$ have been described for $i\leq  k$ and that $A_i=h_i(A)$,  $B_i=h_i(B)$, and $V^{j}_i=h_i(V^j)$ for $i\leq k$.  
Let $T_i=h_i(T)$.
Also assume that for each $i\leq k$:

\begin{subequations}\label{Inductive}
\begin{eqnarray}
&A_i \cap T_{i-1} = A_{i-1}
\text{ and }B_i \cap T_{i-1} = B_{i-1} \label{Inductive:ab}\\
&V^{0}_{i-1} \subset V^{0}_i 
\text{ and for } j>0,  V^{j}_{i-1} \subset  \cup_{\ell<j}V^\ell_{i}
\label{Inductive:cd}\\
& \text{the pair } (T_{i-1},T_i) \text{ is a Gabai Link of order }  n_i.
\label{Inductive:e}\end{eqnarray}
\end{subequations} 

For the inductive step, let $h_{k+1}:T\rightarrow R^3$ be given by 
$h_k\circ g_{n_{k+1}}^{-1}$ and let $T_{k+1}=h_{k+1}(T)$,  $A_{k+1}=h_{k+1}(A)$,  $B_{k+1}=h_{k+1}(B)$, and $V^{j}_{k+1}=h_{k+1}(V^j)$. Note that the pair $(T_{k},T_{k+1})$ is then homeomorphic to $(T^{\prime}_{n_{k+1}},T)$ via the homeomorphism  
$g_{n_{k+1}}\circ h_k^{-1}$
and so forms a Gabai Link of index $2n_{k+1}$. This shows that Statement (\ref{Inductive:e}) holds when $i=k+1$. Properties (\ref{Inductive:ab}) and (\ref{Inductive:cd}) for $i=k+1$ follow by applying $h_{k+1}$ to properties 
(\ref{SetupConditions1}), (\ref{SetupConditions2}), and (\ref{SetupConditions3})
 from Lemma \ref{SetupLemma}. This completes the verification of the inductive step and shows that the following lemma holds.

\begin{lemma} \label{GabaiLemma} 
The Gabai 3-manifold $G = \ds \bigcup_{m = 0}^\infty T_m$
constructed  above satisfies the properties listed in  (\ref{Inductive:ab}),  
(\ref{Inductive:cd}), and (\ref{Inductive:e}) for all $i>0$.
\end{lemma}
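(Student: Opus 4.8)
The plan is to prove the lemma by induction on $i$. All three assertions are precisely the properties that the construction preceding the statement was arranged to deliver, so the substance lies in Lemma \ref{SetupLemma}; what remains is bookkeeping — composing the homeomorphisms $g_{n_j}^{-1}$ in the right order and correctly inverting the inclusions of Lemma \ref{SetupLemma}. The one background fact used repeatedly is that each $h_i$ extends to a homeomorphism of all of $\RR^3$, being a finite composition of maps $g_{n_j}^{-1}$ each of which so extends by the final paragraph of the proof of Lemma \ref{SetupLemma}; this is what allows $h_i$ to be applied freely to intersections and to the Gabai Link structure.

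The base case $i=1$ uses $h_1=g_{n_1}^{-1}$, $T_0=T$, $T_1=h_1(T)$: applying the homeomorphism $g_{n_1}$ identifies $(T_0,T_1)$ with the Gabai Link $(g_{n_1}(T),T)=(T^{\,\prime}_{n_1},T)$ of order $n_1$, which is (\ref{Inductive:e}) for $i=1$, and applying $g_{n_1}^{-1}$ to the conclusions of Lemma \ref{SetupLemma} taken with $n=n_1$ turns (\ref{SetupConditions1}) into $A_1\cap T_0=A$ and $B_1\cap T_0=B$, turns (\ref{SetupConditions2}) into $V^0\subset g_{n_1}^{-1}(V^0)=V^0_1$, and turns (\ref{SetupConditions3}) into $V^i\subset\bigcup_{j<i}g_{n_1}^{-1}(V^j)=\bigcup_{j<i}V^j_1$ for $i>0$; these are (\ref{Case1:ab}) and (\ref{Case1:cd}).

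For the inductive step, assume the three assertions for all $i\leq k$ and set $h_{k+1}=h_k\circ g_{n_{k+1}}^{-1}$, with $T_{k+1}=h_{k+1}(T)$, $A_{k+1}=h_{k+1}(A)$, $B_{k+1}=h_{k+1}(B)$, and $V^j_{k+1}=h_{k+1}(V^j)$. The homeomorphism $g_{n_{k+1}}\circ h_k^{-1}$ of $\RR^3$ carries $T_k=h_k(T)$ to $g_{n_{k+1}}(T)=T^{\,\prime}_{n_{k+1}}$ and carries $T_{k+1}=h_k\bigl(g_{n_{k+1}}^{-1}(T)\bigr)$ to $T$, so $(T_k,T_{k+1})$ is a Gabai Link of order $n_{k+1}$, which is (\ref{Inductive:e}) for $i=k+1$ and in particular gives $T_k\subset{\rm Int}\,T_{k+1}$. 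Then I would apply $g_{n_{k+1}}^{-1}$ to the conclusions of Lemma \ref{SetupLemma} (with $n=n_{k+1}$) and push the results forward by $h_k$, using $h_k(A)=A_k$, $h_k(T)=T_k$, $h_k(V^j)=V^j_k$, $h_k\bigl(g_{n_{k+1}}^{-1}(A)\bigr)=A_{k+1}$, and similarly for $B$ and the $V^j$: condition (\ref{SetupConditions1}) becomes $A_{k+1}\cap T_k=A_k$ and $B_{k+1}\cap T_k=B_k$; (\ref{SetupConditions2}) becomes $V^0_k\subset V^0_{k+1}$; and (\ref{SetupConditions3}) becomes $V^j_k\subset\bigcup_{\ell<j}V^\ell_{k+1}$ for $j>0$. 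These are exactly (\ref{Inductive:ab}) and (\ref{Inductive:cd}) for $i=k+1$, completing the induction.

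I do not expect a genuine obstacle here; the argument assembles pieces already in place. The one point to keep in view is the order of composition: taking $h_{k+1}=h_k\circ g_{n_{k+1}}^{-1}$ rather than $g_{n_{k+1}}^{-1}\circ h_k$ is what makes $h_{k+1}$ agree with $h_k$ near the outside, and this is what forces the nested solid tori and their special subsets $A_k,B_k,V^j_k$ to be compatible from one stage of $G$ to the next.
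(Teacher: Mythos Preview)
Your proposal is correct and follows essentially the same inductive argument the paper gives in the Construction subsection immediately preceding the lemma: the base case is exactly the paper's verification of (\ref{Case1:ab})--(\ref{Case1:cd}), and your inductive step is the paper's, with the only cosmetic difference that the paper says ``apply $h_{k+1}$ to properties (\ref{SetupConditions1})--(\ref{SetupConditions3})'' while you factor this as ``apply $g_{n_{k+1}}^{-1}$, then push forward by $h_k$,'' which is the same map. Your explicit remark that each $h_i$ extends to a homeomorphism of $\RR^3$ (so that it respects intersections) is a helpful clarification the paper leaves implicit.
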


\subsection{Main Result on Gabai Manifolds}
%%%%%%%%%%%%%%%%%%%%%%%%%%%%%%%%
Using the notation from the previous subsection we can state and prove our main result about Gabai manifolds.

\begin{theorem}
Let  $G = \ds \bigcup_{m = 0}^\infty T_m$ be a Gabai contractible 3-manifold where each $T_m$ is a solid torus and consecutive tori form a Gabai Link.  Then $G$ satisfies the double 3-space property.\label{GabaiTheorem}
\end{theorem}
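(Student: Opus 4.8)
The plan is to build the two open sets $U$ and $V$ out of the special subsets constructed in Section~\ref{SpecialSubsets}, following Gabai's strategy for the Whitehead manifold. First I would set
\[
A_\infty = \bigcup_{m\geq 0} A_m, \qquad B_\infty = \bigcup_{m\geq 0} B_m,
\]
where $A_m = h_m(A)$, $B_m = h_m(B)$ and $A_0 = A$, $B_0 = B$. By property (\ref{Inductive:ab}) in Lemma~\ref{GabaiLemma} the sets $A_m$ are nested up to intersection with $T_{m-1}$, so $A_\infty$ and $B_\infty$ are well-defined closed subsets of $G$, each an increasing union of copies of $B^2\times C$. The candidates for the two open sets are $U = G - B_\infty$ and $V = G - A_\infty$; since $A$ and $B$ sit in disjoint ``halves'' $B^2\times[0,1/3]$ and $B^2\times[2/3,1]$ of $T$, and the Gabai Link keeps $A'_n\subset B^2\times[0,1/3]$ and $B'_n\subset B^2\times[2/3,1]$ (established in the proof of Lemma~\ref{SetupLemma}), one checks $A_\infty\cap B_\infty=\emptyset$, so $U\cup V = G$. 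The intersection $U\cap V = G - (A_\infty\cup B_\infty)$ is the complement of both Cantor-set tubes.

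The heart of the argument is showing each of $U$, $V$, and $U\cap V$ is homeomorphic to $\mathbb R^3$. The strategy is to realize each as a monotone increasing union of compact sets that are themselves $3$-cells (or a ball complement that is a cell), then invoke the characterization that an increasing union of open subsets of a $3$-manifold, each of which is $\mathbb R^3$ and each of which lies in the interior of the next with nice frontier behavior, is again $\mathbb R^3$ — more precisely, I would use that a contractible open $3$-manifold which is the monotone union of open $3$-cells is $\mathbb R^3$ (this is standard; see the references on contractible open $3$-manifolds cited in the excerpt, in particular that such a manifold is $\mathbb R^3$ once one knows it is simply connected at infinity). The key geometric input is that removing $B_\infty$ from each stage $T_m$ yields a space that deformation retracts onto $T_m$ with a subarc of its Cantor-set worth of sub-tubes deleted; because at each stage the Gabai Link is arranged (conditions (\ref{SetupConditions2}), (\ref{SetupConditions3})) so that the relevant pieces of $V^i_n{}'$ land inside lower-index $V^j$, the removed set $B_\infty\cap T_m$ is a tame Cantor set of parallel sub-tubes lying in $B^2\times[2/3,1]$, and $T_m - B_\infty$ is an open $3$-cell. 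The same holds for $T_m - A_\infty$ and for $T_m - (A_\infty\cup B_\infty)$, with the extra bookkeeping that one is deleting tame tube-Cantor-sets from both ends.

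Concretely, the steps in order: (1) define $A_\infty$, $B_\infty$, $U$, $V$ and verify $U\cup V = G$, $U\cap V = G - (A_\infty\cup B_\infty)$, using disjointness of the halves; (2) show $A_\infty\cap T_m$ (resp.\ $B_\infty\cap T_m$, resp.\ their union) is a tamely embedded copy of $B^2\times C$ inside $T_m$ lying in the appropriate sub-cylinder, using (\ref{Inductive:cd}) to control how the Cantor tubes of stage $m-1$ sit inside stage $m$; (3) conclude that $T_m - B_\infty$, $T_m - A_\infty$, and $T_m - (A_\infty\cup B_\infty)$ are each open $3$-cells (a solid torus minus a tame meridional family of sub-tubes along a Cantor set, pushed to the boundary region, is a $3$-cell — this is where the $U_i$/$V^i$ nesting does its work); (4) observe that $U = \bigcup_m \big(\mathrm{Int}\,T_m - B_\infty\big)$ is a monotone increasing union of open $3$-cells, hence $U$ is a contractible open $3$-manifold that is an increasing union of cells, so $U\cong\mathbb R^3$; likewise for $V$ and $U\cap V$; (5) assemble the conclusion.

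The main obstacle I anticipate is step~(3): proving that each stage, with the Cantor family of sub-tubes removed, is genuinely a $3$-cell. This is not automatic for an arbitrary embedded $B^2\times C$ — one needs that the removed tubes are unknotted, unlinked, parallel to each other and can be isotoped into a collar of $\partial T_m$, which is exactly what conditions (\ref{SetupConditions2}) and (\ref{SetupConditions3}) of Lemma~\ref{SetupLemma} are designed to guarantee via the $V^i\subset V^j$ ($j<i$) nesting. I would handle this by an explicit isotopy at each stage, pushing the $V^i_n{}'$ components out through the $V^j$ in which they sit, decreasing $i$ until everything lies in $V^0\cup V^1$, and then realizing the complement as $B^2\times S^1$ minus a product sub-Cantor-set of tubes, which is a cell; the compatibility of these isotopies across stages (so that the monotone-union structure is preserved) is the delicate part and will require invoking the nesting conditions uniformly.
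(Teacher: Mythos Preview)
Your overall architecture matches the paper's: define $A_\infty$ and $B_\infty$ exactly as you do, set the two open pieces to be their complements, and prove each of the three open sets is $\mathbb{R}^3$ by exhibiting it as a monotone union of open $3$-cells and invoking Brown's theorem. The disjointness verification in your step~(1) is fine.

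The genuine gap is your step~(3). The sets $T_m - A_\infty$, $T_m - B_\infty$, and $T_m - (A_\infty\cup B_\infty)$ are \emph{not} open $3$-cells: they are not even connected. Since $A_\infty\cap T_m = A_m = h_m(B^2\times C_1)$, you have $T_m - A_\infty = h_m\big(B^2\times (S^1 - C_1)\big)$, and $S^1 - C_1$ is a disjoint union of one long open arc together with countably many short open intervals (the gaps of the Cantor set $C_1$ inside $[0,1/3]$). So $T_m - A_\infty$ has infinitely many components. Your parenthetical assertion that ``a solid torus minus a tame meridional family of sub-tubes along a Cantor set \ldots\ is a $3$-cell'' is therefore false, and no amount of isotopy within $T_m$ will change the component count.

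The paper repairs exactly this point, and the repair is where conditions (\ref{Inductive:cd}) really earn their keep. Instead of using $T_m$ minus the Cantor tubes as the exhausting sets, the paper keeps only the single ``main'' component at each stage: for $U\cap V$ one uses $\mathrm{Int}\,V_n^0$, and for $V = G - A_\infty$ one uses $\mathrm{Int}\,W_n^0$ where $W^0 = V^0\cup V^1\cup h(B^2\times[2/3,1])$, each of which is manifestly an open $3$-cell (product of an open disk with an open arc). The nesting condition (\ref{Inductive:cd}) then does double duty: it shows these main components are nested, and it shows they exhaust --- any point lying in some $V_m^i$ with $i>0$ lands in $V_{m+1}^j$ for some $j<i$, so after at most $i$ steps it lies in $V_{m+i}^0$. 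In other words, the ``extra'' components you were trying to kill by isotopy within a fixed $T_m$ are instead absorbed by passing to later stages. Your step~(4) should be rewritten with this exhaustion; once that is done the argument goes through with Brown's theorem exactly as you intended.
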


{\sc Proof.}  
The key to the proof is that in the Gabai manifold $G$, we may assume that the  conditions in Lemma \ref{GabaiLemma} are satisfied.

To show that $G$ satisfies the double 3-space property, we choose the closed sets $A^{\,\prime}= \ds \cup_{n=0}^\infty A_n$ and  $B^{\,\prime} = \ds \cup_{n=0}^\infty B_n$.  Recall that $A_n=h_n(A)=h_n(h(B^2\times C_1))$ and that $B_n=h_n(B)=h_n(h(B^2\times C_2))$. 
We claim that $M = G - A^{\,\prime}$, $N = G - B^{\,\prime}$ and $M \cap N = 
G - (A^{\,\prime} \cup B^{\,\prime})$ are each homeomorphic to $\RR^3$.  

We first show  $M \cap N = G - (A^{\,\prime} \cup B^{\,\prime})$ is homeomorphic to $\RR^3$.  It suffices to show that  $M\cap N$ is an increasing union 
of  copies of  $\RR^3$ \cite{Bro61}. First notice that ${\rm Int}\ V_n^0\subset T_{n}$ is homeomorphic to $\RR^3$ since it is the product of an open interval and an open 2-cell.  Next notice that $M\cap N = \ds \bigcup_{n=0}^\infty {\rm\  Int} \  V_n^0$ because any point $p$ in $M\cap N$ must lie in the interior of some $V_m^i$ and therefore lies in the interior of  $V_{m+i}^0$ by condition (3b) in Lemma \ref{GabaiLemma}. Again  by condition (3b) in Lemma \ref{GabaiLemma}, the ${\rm Int}\ V_n^0$ are nested. So $M\cap N$ is an increasing union of copies of $\RR^3$, and so is homeomorphic to $\RR^3$.

The proofs that $M$ and $N$ are homeomorphic to $\RR^3$ are similar, so we will just focus on $M$. Let 
$W^0=V^0\cup V^1\cup h(B^2\times[2/3,1])\subset T$ and let $W^i=V^{i+1}\cap h(B^2\times[0,1/3])\subset T$. Then $T-\bigcup_{i=0}^\infty W^i$ is precisely $ A$.
Let $h_0=h$ and $W^{n}_{i}=h_{i}(W^n)$. Then as in the preceding paragraph, by the conditions in Lemma \ref{GabaiLemma}, $M =\bigcup_{n=0}^\infty {\rm\  Int} \ W_0^n$ which is an increasing union of copies of $\RR^3$. So $M$ is homeomorphic to $\RR^3$.
\qed

\begin{corollary} 
There are uncountably many distinct contractible 3-manifolds with the double 3-space property.
\end{corollary}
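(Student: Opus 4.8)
The plan is to combine Theorem \ref{GabaiTheorem} with the uncountability statement recorded just after Definition \ref{GabaiManifoldDef}. For each sequence $\mathbf{n}=(n_1,n_2,\dots)$ of positive integers, let $G_{\mathbf{n}}=\bigcup_{m=0}^\infty T_m$ be the associated Gabai contractible 3-manifold, so that $(T_{m-1},T_m)$ is a Gabai Link of order $n_m$. By Lemma \ref{G index 3} we have $N(T_{m-1},T_m)=2n_m$, and the inner torus of a Gabai Link is contractible in the outer one, so $G_{\mathbf{n}}$ is a genus one contractible open 3-manifold, and by Theorem \ref{GabaiTheorem} it has the double 3-space property. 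It therefore suffices to exhibit uncountably many pairwise non-homeomorphic manifolds among the $G_{\mathbf{n}}$.

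To do this, for each set $\mathcal{P}$ of odd primes I would choose a sequence $\mathbf{n}(\mathcal{P})$ of positive integers with the property that an odd prime $p$ divides infinitely many entries of $\mathbf{n}(\mathcal{P})$ if and only if $p\in\mathcal{P}$ — for instance, fix an enumeration of $\mathcal{P}$ and interleave its members with increasing multiplicity. Since there are uncountably many sets of odd primes, this yields an uncountable family $\{G_{\mathbf{n}(\mathcal{P})}\}$. The claim is that $G_{\mathbf{n}(\mathcal{P})}$ and $G_{\mathbf{n}(\mathcal{Q})}$ are non-homeomorphic whenever $\mathcal{P}\ne\mathcal{Q}$.

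For the non-homeomorphism claim I would run McMillan's argument from \cite{McM62}, which transfers verbatim because it uses only the multiplicativity of the geometric index together with the values $N(T_{m-1},T_m)=2n_m$. The point is that if $h\colon G_{\mathbf{n}}\to G_{\mathbf{m}}$ is a homeomorphism and $(S_k)$ denotes the $h$-image of the defining sequence of $G_{\mathbf{m}}$, then compactness lets one interleave the two nested families: for each $i$ there are indices with $T_i\subset S_a\subset T_b$, and since geometric index is multiplicative, $\prod_{k=i+1}^{b}2n_k = N(T_i,T_b) = N(T_i,S_a)\cdot N(S_a,T_b)$, with a matching statement in the other direction. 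Chasing these divisibilities shows that an odd prime dividing infinitely many of the $n_k$ must divide infinitely many of the $m_k$, and symmetrically; hence the set of odd primes dividing infinitely many entries of $\mathbf{n}$ is a homeomorphism invariant of $G_{\mathbf{n}}$. Applied to the $\mathbf{n}(\mathcal{P})$, this produces uncountably many pairwise non-homeomorphic contractible 3-manifolds, each with the double 3-space property by Theorem \ref{GabaiTheorem}.

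The main obstacle is the last step: making the divisibility bookkeeping airtight in the Gabai setting. The only genuinely new input beyond McMillan's original argument is the computation $N(T_{m-1},T_m)=2n_m$ for a Gabai Link of order $n_m$ (Lemma \ref{G index 3}); once that is in hand, multiplicativity of the geometric index across the whole nested sequence reduces the problem to exactly the number-theoretic argument McMillan used for Whitehead-type links, and no further topology is needed.
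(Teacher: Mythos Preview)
Your proposal is correct and takes essentially the same approach as the paper: the paper's own proof is the one-liner ``This follows directly from Theorem \ref{GabaiTheorem} and the discussion following Definition \ref{GabaiManifoldDef},'' and you have simply unpacked that discussion (McMillan's geometric-index argument, with the geometric index computation supplied by Lemma \ref{G index 3}). Your restriction to odd primes is a sensible sharpening, since every geometric index $2n_m$ is already even.
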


{\sc Proof.}  This follows directly from Theorem \ref{GabaiTheorem} and the discussion following Definition \ref{GabaiManifoldDef}. \qed

\section{Interlacing Theory}\label{InterLacingSec}
%%%%%%%%%%%%%%%%%%%%%%%%%%%%%%%%
We now work towards showing there is an infinite class  of contractible, genus one 3-manifolds that fail to have the double 3-space property. The key concept needed here is a generalization of interlacing theory, introduced in 
\cite{Wri89}.

\begin{definition}  
Let $A$ and $B$ be finite subsets of a simple closed curve $J$ each containing $k$ points. We say $(A,B)$ is a \emph{$k$-interlacing of points} if each component of $J-A$ contains exactly one point of $B$.
\end{definition}

The idea is that in traversing the curve one alternates $k$ times from the set $A$ to the set $B$.
Note that the definition also implies that each component of $J-B$ contains exactly one point of $A$.

\begin{definition}
Let $A$ and $B$ be disjoint compact sets.  We say that $(A,B)$ is a \emph{$k$-interlacing for a simple closed curve $J$} if there exist finite subsets $A' \subset A \cap J$ and $B' \subset B \cap J$ so that  $(A',B')$ is a $k$-interlacing of points, but it is impossible to find such subsets that form a $(k+1)$-interlacing of points.  If either $A \cap J = \emptyset$ or $B \cap J=\emptyset$, then we say that $(A,B)$ is a $0$-interlacing.
\end{definition}

\textbf{Note:}  If $(A,B)$ is a \emph{$k$-interlacing for a simple closed curve $J$}, and $(A\subset C, B\subset D)$, then $(C,D)$ is an \emph{$\ell$-interlacing for the simple closed curve $J$} where $\ell\geq k$.

\begin{theorem}[\textbf{Interlacing Theorem for a Simple Closed Curve}]  
If $A$ and $B$ are disjoint compact sets and $J$ is a simple closed curve, then $(A,B)$ is a $k$-interlacing for some non-negative integer $k$.
\end{theorem}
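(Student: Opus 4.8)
The plan is to work entirely with the traces $A\cap J$ and $B\cap J$, which are disjoint \emph{compact} subsets of the circle $J$, and to show that among all interlacings of points one can extract from them there is one of largest order; that order is then the required $k$. First I would dispose of the degenerate case: if $A\cap J=\emptyset$ or $B\cap J=\emptyset$, then $(A,B)$ is a $0$-interlacing by definition and there is nothing to prove. So assume both traces are non-empty. Then for any $a\in A\cap J$ and $b\in B\cap J$, the single arc $J-\{a\}$ contains the single point $b$, so $(\{a\},\{b\})$ is a $1$-interlacing of points; hence the set
\[
 S=\{\,n\ge 1:\ \exists\,A'\subset A\cap J,\ B'\subset B\cap J\ \text{forming an }n\text{-interlacing of points}\,\}
\]
is non-empty.

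The crux is to bound $S$. Since $A\cap J$ is compact, $J-(A\cap J)$ is open in $J$ and hence a disjoint union of open arcs; let $\{\alpha_i\}$ be its components. The compact set $B\cap J$ is contained in $\bigcup_i\alpha_i$, and because the $\alpha_i$ are pairwise disjoint open sets, compactness forces $B\cap J$ to meet only finitely many of them, say exactly $m$. Now let $(A',B')$ be any $n$-interlacing of points. I claim the $n$ points of $B'$ lie in $n$ \emph{distinct} components of $J-(A\cap J)$. Indeed, each such component $C$ is connected and contained in $J-A'$ (since $A'\subset A\cap J$), so $C$ lies inside a single component $D$ of $J-A'$; and by the interlacing condition $D$ contains exactly one point of $B'$, so $C$ contains at most one point of $B'$. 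As the $n$ points of $B'$ are distinct, they occupy $n$ different components of $J-(A\cap J)$, each of which meets $B\cap J$; therefore $n\le m$. Thus $S\subseteq\{1,\dots,m\}$, and $k:=\max S$ exists.

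By the choice of $k$ there is an $n$-interlacing of points with $n=k$ but none with $n=k+1$ (the latter would put $k+1$ into $S$), so $(A,B)$ is a $k$-interlacing for $J$, as desired. (One can also verify that $S$ is an initial segment of the positive integers --- deleting one suitably chosen point from each of $A'$ and $B'$ turns an $n$-interlacing of points into an $(n-1)$-interlacing --- but this is not needed for the conclusion.)

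I expect the boundedness argument in the middle paragraph to be the only real obstacle, and it is precisely where the compactness of $A$ and $B$ enters; without it the statement fails, since non-compact disjoint sets on $J$ can be interlaced to every finite order. The remaining steps are routine bookkeeping with the definitions of interlacing of points and of the cyclic order on a simple closed curve.
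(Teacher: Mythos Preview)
Your proof is correct and follows essentially the same strategy as the paper: handle the degenerate case by definition, then use compactness to bound the possible interlacing orders and take the maximum. The only difference is in the bookkeeping for the bound---the paper covers each of $A\cap J$ and $B\cap J$ by finitely many pairwise disjoint open arcs $U_1,\dots,U_m$ and $V_1,\dots,V_n$ (with all $U_i$ disjoint from all $V_j$) and observes that an interlacing of points can use at most one point from each arc, whereas you look at the components of $J\setminus(A\cap J)$ and use compactness of $B\cap J$ to see that only finitely many of them meet $B$. Both are straightforward compactness arguments giving the same conclusion; your asymmetric version is arguably slightly cleaner since it avoids having to justify the ``at most one point per $U_i$'' claim, which the paper leaves implicit.
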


{\sc Proof.}  
If $A \cap J= \emptyset$ or  $B \cap J= \emptyset$, then  $(A,B)$ is a $0$-interlacing.  Otherwise, using compactness, it is possible to cover $A \cap J$ with a finite collection of non-empty, connected,  disjoint open sets $U_1, U_2, \dots , U_m$ and cover $B \cap J$ with a finite collection of non-empty, connected,  disjoint open sets $V_1, V_2, \dots , V_n$ so that the $U_i$ and $V_j$ are also disjoint.  If  $A' \subset A$ and $B' \subset B$ so that $(A',B')$ is a $k$-interlacing of points for $J$, then $A'$ contains at most one point from each $U_i$ and $B'$ contains at most one point from each $V_j$.  So there is a bound on $k$, and our theorem is proved. 
\qed

\begin{theorem}[\textbf{Neighborhood Interlacing Theorem for Simple Closed Curves}]  
If  $(A,B)$ is a $k$-interlacing for a simple closed curve $J$,  then there are open neighborhoods $U$  and $V$ of $A \cap J$ and $B \cap J$, respectively, in $J$ so that if $\widetilde{A}$ and $\widetilde{B}$ are disjoint compact sets with $A \cap J \subset \widetilde{A} \cap J \subset U$ and $B \cap J \subset  \widetilde{B} \cap J \subset V$, then $(\widetilde{A},\widetilde{B})$ is also a $k$-interlacing.
\end{theorem}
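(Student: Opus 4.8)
The plan is to build $U$ and $V$ as finite unions of pairwise disjoint open sub-arcs of $J$ and then argue that enlarging $A\cap J$ and $B\cap J$ within these arcs cannot create a longer interlacing of points. First I would handle the degenerate case $k=0$: then $A\cap J=\emptyset$ or $B\cap J=\emptyset$, say $A\cap J=\emptyset$, and I would take $U=\emptyset$ and $V=J$; any $\widetilde A$ with $\widetilde A\cap J\subseteq U$ has $\widetilde A\cap J=\emptyset$, so $(\widetilde A,\widetilde B)$ is a $0$-interlacing (and symmetrically if $B\cap J=\emptyset$). So from now on assume $A\cap J\neq\emptyset\neq B\cap J$, whence $k\geq 1$.

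For the construction I would imitate the proof of the Interlacing Theorem for a Simple Closed Curve: since $A\cap J$ and $B\cap J$ are disjoint compact subsets of the circle $J$, pick $\epsilon>0$ with $d(A\cap J,B\cap J)>2\epsilon$ and let $U$ and $V$ be the open $\epsilon$-neighborhoods in $J$ of $A\cap J$ and $B\cap J$, respectively, after deleting any component meeting neither set. Then $U$ is a disjoint union of finitely many open arcs $U_1,\dots,U_m$, each meeting $A\cap J$ and each a proper sub-arc of $J$; $V$ is a disjoint union of open arcs $V_1,\dots,V_n$ with the analogous properties; and all the $U_i$ and $V_j$ are pairwise disjoint.

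Now let $\widetilde A,\widetilde B$ be disjoint compact sets with $A\cap J\subseteq\widetilde A\cap J\subseteq U$ and $B\cap J\subseteq\widetilde B\cap J\subseteq V$; by the Interlacing Theorem $(\widetilde A,\widetilde B)$ is an $\ell$-interlacing for some $\ell$. Since the $k$-interlacing of points witnessing $(A,B)$ already lies in $A\cap J$ and $B\cap J$, hence in $\widetilde A$ and $\widetilde B$, we get $\ell\geq k$ for free; the work is the reverse inequality. So suppose $a_1,b_1,\dots,a_\ell,b_\ell$ lie in this cyclic order on $J$ with every $a_i\in\widetilde A\cap J\subseteq U$ and every $b_i\in\widetilde B\cap J\subseteq V$. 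I would first show that no two $a_i$ lie in a common arc $U_s$: if $a_p,a_q\in U_s$ with $p\neq q$, then, $U_s$ being a proper sub-arc of $J$, one of the two arcs of $J\setminus\{a_p,a_q\}$ is contained in $U_s$ and so misses $V$, yet when $\ell\geq 2$ both of those arcs must contain some $b_i\in V$ — a contradiction. Symmetrically no two $b_i$ share an arc $V_t$. Consequently I may replace each $a_i$ by a point $a_i'\in A\cap J\cap U_{s(i)}$ and each $b_i$ by a point $b_i'\in B\cap J\cap V_{t(i)}$, all $2\ell$ chosen points being distinct because the relevant arcs are pairwise distinct. A single representative point from each of a cyclically ordered family of pairwise disjoint arcs inherits the cyclic order of the arcs, so $a_1',b_1',\dots,a_\ell',b_\ell'$ still alternate cyclically; this is an $\ell$-interlacing of points with $\{a_i'\}\subseteq A\cap J$ and $\{b_i'\}\subseteq B\cap J$, and since $(A,B)$ is a $k$-interlacing it forces $\ell\leq k$. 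Hence $\ell=k$ and $(\widetilde A,\widetilde B)$ is a $k$-interlacing.

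The hard part is exactly the rigidity in the last paragraph. The tempting shortcut — shrink $U,V$ toward $A\cap J,B\cap J$ through a nested sequence, extract a limit of $(k+1)$-interlacings, and contradict the hypothesis — fails because distinct interlacing points can converge to a single point of $A\cap J$ or of $B\cap J$, so the alternation need not be preserved in the limit. Choosing $U$ and $V$ to be finite unions of pairwise disjoint arcs is what restores the needed rigidity: it permits the ``retraction'' of an alternating configuration inside $U,V$ to one of the same length inside $A\cap J,B\cap J$, and verifying that retraction — in particular the arc-sharing dichotomy for the $a_i$'s and $b_i$'s — is the step that needs genuine care.
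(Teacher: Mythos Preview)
Your proof is correct and follows essentially the same idea as the paper's: cover $A\cap J$ and $B\cap J$ by finitely many pairwise disjoint open arcs of $J$, and use that an interlacing of points can draw at most one point from each arc. The paper compresses the argument by first merging adjacent same-type arcs so that one may take $m=n=k$, after which the conclusion is immediate from the bound in the preceding theorem; you instead keep arbitrary $m,n$ and carry out the retraction of an $\ell$-interlacing in $(\widetilde A,\widetilde B)$ back to one in $(A,B)$ explicitly, which is the same reasoning with the details spelled out.
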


{\sc Proof.}   
As in the proof of the preceding theorem find the non-empty, connected,  disjoint open sets $U_i$ and $V_i$, but in addition we may assume that $m=n=k$.  Let $ U=\bigcup_{i=1}^m U_i$ and $V=\bigcup_{i=1}^n V_i$.
\qed

The following theorem helps to clarify the concept of a meridional disk with holes.

 \begin{theorem} [\textbf{Meridional Disk with Holes Theorem}] \cite[Theorem A6]{Wri89}
Let $H$ be a properly embedded disk with holes in a solid torus $T$.  Then $H$ is a meridional disk with holes if and only if the inclusion $f:H \to T$ lifts to  a map $\hat{f}$  from $H$ to the universal cover $\widetilde{T} =  B^2 \times \mathbb R$ and $\hat{f}(H)$  separates 
$\widetilde{T}$ into two unbounded components.  
\end{theorem}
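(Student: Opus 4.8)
The plan is to prove both directions by relating meridional disks with holes to separation properties in the universal cover $\widetilde{T} = B^2 \times \mathbb{R}$, with covering projection $\pi : \widetilde{T} \to T$ whose deck group is generated by the translation $\tau(x,t) = (x, t+1)$. Note first that since $H$ is a disk with holes, it is connected and $\pi_1(H)$ is free, and any inclusion-induced $f_*: \pi_1(H) \to \pi_1(T) \cong \mathbb{Z}$ has image either trivial or $d\mathbb{Z}$ for some $d \ge 1$; because $H$ is properly embedded with $\partial H \subset \partial T$, a loop in $H$ maps to a loop on $\partial T$, and the key dichotomy is whether $f_*$ is trivial (so a lift $\hat f$ exists) or not. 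The relevant observation is that $f$ lifts to $\widetilde{T}$ if and only if $f_*(\pi_1(H)) = 0$, i.e. every loop in $\partial H$ (equivalently, every boundary circle of $H$) is null-homotopic in $T$, which is exactly the interior-inessential condition up to the homotopy-extension subtlety. So the first step is to show: $H$ is interior-inessential (the inclusion on $\partial H$ extends to a map $H \to \partial T$) if and only if each boundary circle of $H$ bounds in $\partial T$, if and only if $f_*$ is trivial on $\pi_1(H)$, if and only if $\hat f$ exists. This handles the "lifts to a map $\hat f$" clause of the statement.

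Second, assuming the lift $\hat f : H \to \widetilde{T}$ exists, I would analyze when $\hat f(H)$ separates $\widetilde{T}$ into two unbounded components. Since $\widetilde{T} \cong \mathbb{R}^3$ and $\hat f(H)$ is (the image of) a properly embedded — after a small general-position adjustment — surface with boundary on $\partial \widetilde{T} = B^2 \times \mathbb{R}$, I want to argue $\hat f(H)$ together with part of $\partial \widetilde{T}$ forms a separating set. The cleanest route: consider the "trace" of $\partial H$ on $\partial T = S^1 \times S^1$. If $H$ is interior-essential, i.e. a meridional disk with holes, then the boundary circles of $H$ are meridians of $T$ (oriented consistently, their classes sum appropriately), so each lifts to an arc in $\partial \widetilde{T} = S^1 \times \mathbb{R}$ running from one end to... no — a meridian $\{p\} \times S^1$ lifts to the closed curve $\{p\} \times S^1$ sitting at a fixed level, which is compact. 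The point is the opposite: the longitude direction is the $\mathbb{R}$-direction, and a meridional disk with holes, when lifted, has compact image but its complement in $\widetilde T$ has two unbounded ends, $t \to +\infty$ and $t \to -\infty$, precisely because $[\hat f(H)]$ is nontrivial in $H_2(\widetilde T, \partial \widetilde T) \cong \mathbb{Z}$, detected by intersection number $1$ with the core line $\{0\} \times \mathbb{R}$. Conversely, if $H$ is interior-inessential, a lift can be pushed into $\partial \widetilde T$ (using the extension to $\partial T$, then lifting that extension), so $\hat f(H)$ does not separate, or separates off only a bounded piece. So the geometric heart is: $H$ meridional $\iff$ the core line $\{0\}\times\mathbb{R}$ of $\widetilde T$ meets $\hat f(H)$ algebraically (hence, after general position, geometrically) nontrivially $\iff$ the two ends of $\widetilde T$ lie in different complementary components.

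Third, I would assemble these into the iff. For ($\Leftarrow$): given $\hat f$ exists and $\hat f(H)$ separates $\widetilde T$ into two unbounded components, both ends $t\to\pm\infty$ are separated, so any proper arc from one end to the other — in particular the core line — crosses $\hat f(H)$; pushing down to $T$, the core of $T$ meets $H$, so $H$ cannot be isotoped into $\partial T$, i.e. $H$ is interior-essential. For ($\Rightarrow$): if $H$ is meridional, first note $f_*$ is trivial (a meridian is null-homotopic in $T$), so $\hat f$ exists; then a homological/degree computation shows the algebraic intersection of $\hat f(H)$ with the core line is $\pm 1$ (this uses that the boundary of a meridional disk with holes, as a $1$-cycle on $\partial T$, is homologous to a single meridian, by definition of "interior-essential" — the inclusion on $\partial H$ does *not* extend over $\partial T$ exactly because the total meridional winding is nonzero), forcing the two ends into distinct components, each of which is unbounded since it contains a full ray of the core line.

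The main obstacle I anticipate is the general-position / PL-embeddedness bookkeeping needed to pass from "$\hat f(H)$, the image of a map" to a genuinely embedded separating surface, and — more subtly — pinning down exactly why interior-inessential is equivalent to "boundary circles null-homotopic in $T$" rather than merely "null-homotopic in $\partial T$": one must use that a disk-with-holes $H$ has its boundary circles cobounding in $H$ a planar surface, so the homotopy class in $\partial T$ of the boundary is constrained, and the extension to $\partial T$ exists iff that constrained class is the obvious one. Handling orientations and the case where $H$ has several boundary components with cancelling meridional contributions (so $H$ is interior-inessential despite individual circles being essential in $\partial T$) is where care is required; but this is precisely the content already isolated in the cited \cite[Theorem A6]{Wri89}, so I would follow that argument, only adapting notation to the present setup.
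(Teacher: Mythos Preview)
This theorem is not proved in the paper; it is quoted from \cite[Theorem A6]{Wri89} and used without argument. So there is no proof in the paper to compare your proposal against, and your attempt must stand on its own.

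Your overall strategy in paragraphs 2--3 is sound, but paragraph 1 contains a real error that you never fully repair. You assert the chain ``$H$ interior-inessential $\iff$ each boundary circle bounds in $\partial T$ $\iff$ $f_*$ trivial on $\pi_1(H)$ $\iff$ $\hat f$ exists.'' The middle biconditional is false: the lift $\hat f$ exists exactly when every boundary circle of $H$ is null-homotopic in $T$ (longitudinal winding zero), whereas interior-inessential means $[\partial H]=0$ in $H_1(\partial T)$. A single meridional disk is interior-\emph{essential} and yet its inclusion lifts --- a fact you yourself invoke in your $(\Rightarrow)$ direction --- which flatly contradicts paragraph~1. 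In paragraph~4 you sense the problem but mis-diagnose it: the two conditions to keep separate are ``each boundary circle null-homotopic in $T$'' and ``the total boundary $1$-cycle trivial in $H_1(\partial T)$,'' and these are genuinely inequivalent.

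What is missing is the step showing that interior-essential implies the lift exists. The boundary circles of $H$ are disjoint simple closed curves on the torus $\partial T$, hence the essential ones are all parallel to a common slope $(p,q)$; since $[\partial H]=0$ in $H_1(T)$, if $q\neq 0$ the signed count of these circles must vanish, which forces $[\partial H]=0$ in $H_1(\partial T)$ as well, i.e.\ $H$ is interior-inessential. Contrapositively, interior-essential forces $q=0$, so every boundary circle is a meridian or trivial and the lift exists. With this in hand your homological argument goes through, modulo the minor correction that the algebraic intersection of $\hat f(H)$ with the core line is $\pm m$ (the net signed meridional count), not necessarily $\pm 1$; ``homologous to a single meridian'' is too strong, though ``total meridional winding nonzero'' is the right conclusion.
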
 

\begin{example}
Figure \ref{diskwHolesFig} illustrates a properly embedded disk with holes in a solid torus. Here M1, M2, and M3  represent meridional disks with two holes. T1 represents a tube joining a hole in M1 to the boundary of the solid torus. T2 and T3 represent tubes joining a hole in M1 to a hole in M2, and a hole in M2 to a hole in M3, respectively. T4 represents a tube joining a hole in M3 to the boundary of the solid torus.  Note that this disk with holes is topologically a planar disk with four  holes, that three of the boundary components are meridional curves in the solid torus, and that two of the boundary components are trivial curves in the boundary of the solid torus.
\end{example}

\begin{center}
\begin{figure}[ht]
 \includegraphics[width=0.5\textwidth]{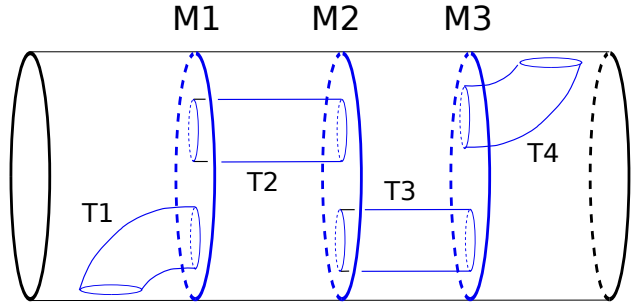}
\caption{%
Meridional Disk with Holes     }%
\label{diskwHolesFig}
\end{figure}
\end{center}

\begin{definition}  
Let $A_1, A_2, \dots, A_k, B_1, B_2, \dots, B_k$ be disjoint meridional disks with holes in a solid torus $T$.  Let $A = \cup_{i=1}^k A_i$ and $B = \cup_{i=1}^k B_i$.  We say that $(A,B)$ is a \emph{$k$-interlacing collection of meridional disks with holes} if each component of $T - A$ contains exactly one $B_i$.
\end{definition}

Note that the definition also implies that each component of $T-B$ contains exactly one  $A_j$.

\begin{definition}
Let $A$ and $B$ be disjoint compact sets.  We say that $(A,B)$ is a \emph{$k$-interlacing for a solid torus $T$} if there exist disjoint meridional disks with holes in $T$,   $A_1, A_2, \dots, A_k, B_1, B_2, \dots, B_k$ with $A' = \cup_{i=1}^k A_i \subset A$ and  $B' = \cup _{i=1}^kB_i \subset B$ so that $(A',B')$ is a $k$-interlacing collection of meridional disks with holes, but it is impossible to find such subsets that form a $(k+1)$-interlacing collection of meridional disks with holes.  If either $A$ or $B$ fails to contain a meridional disk with holes in $T$, then we say that $(A,B)$ is a $0$-interlacing.
\end{definition}

\begin{example}
 Figure \ref{InterlacingFig} illustrates interlacing for two different types of links. Note that for the Gabai Link, the two meridional disks A and B form a 1-interlacing both for the outer and inner component. For the other link (a McMillan Link introduced in the next section), the disks form a 1-interlacing for the outer torus, but form a 3-interlacing for the inner torus. That fact that the interlacing number always increases for the inner torus for these type of links is the key observation that will allow us to prove that certain manifolds constructed using these links do not have the double 3-space property.
\end{example}

\begin{figure}[ht]
\includegraphics[width=0.45\textwidth]{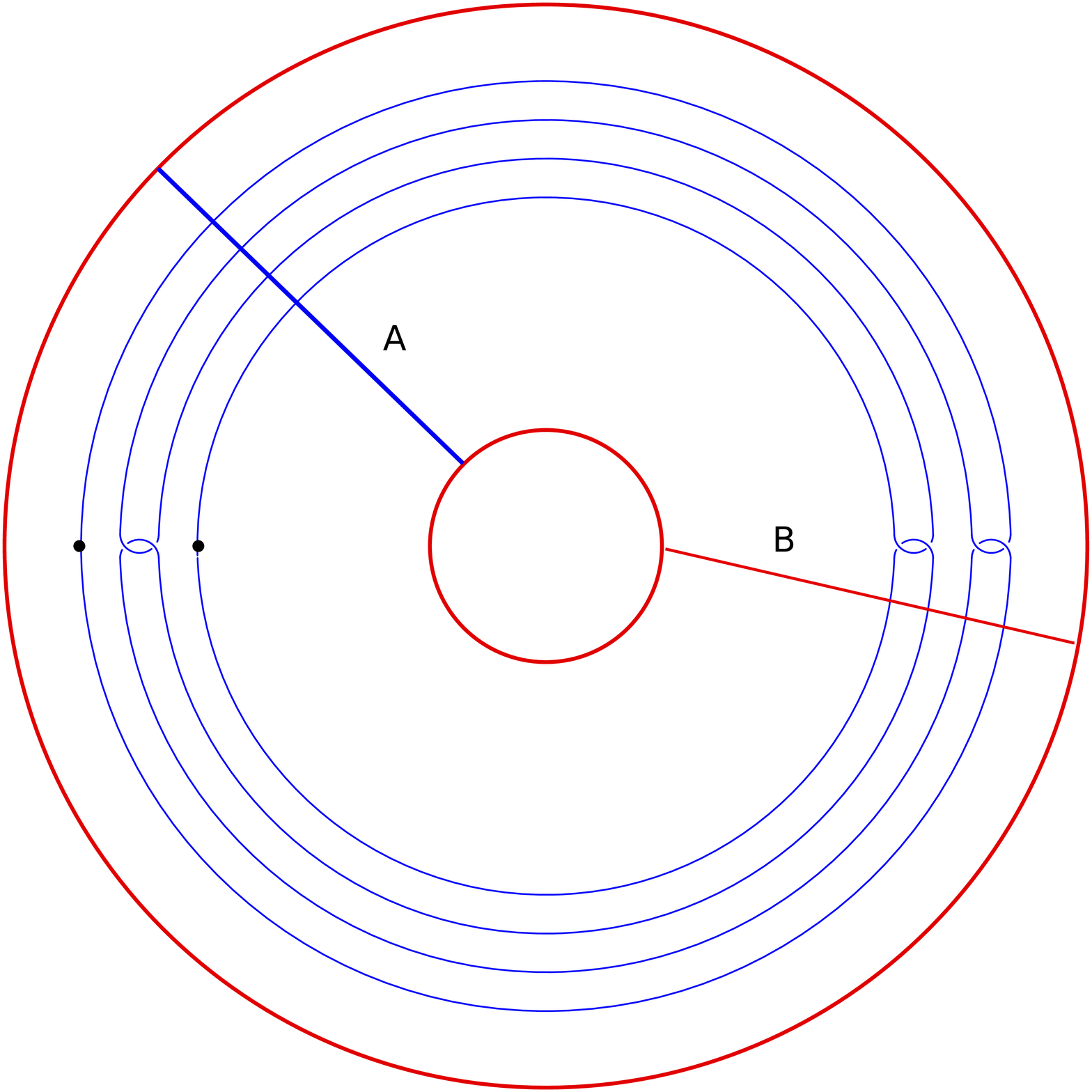}
\ 
\includegraphics[width=0.45\textwidth]{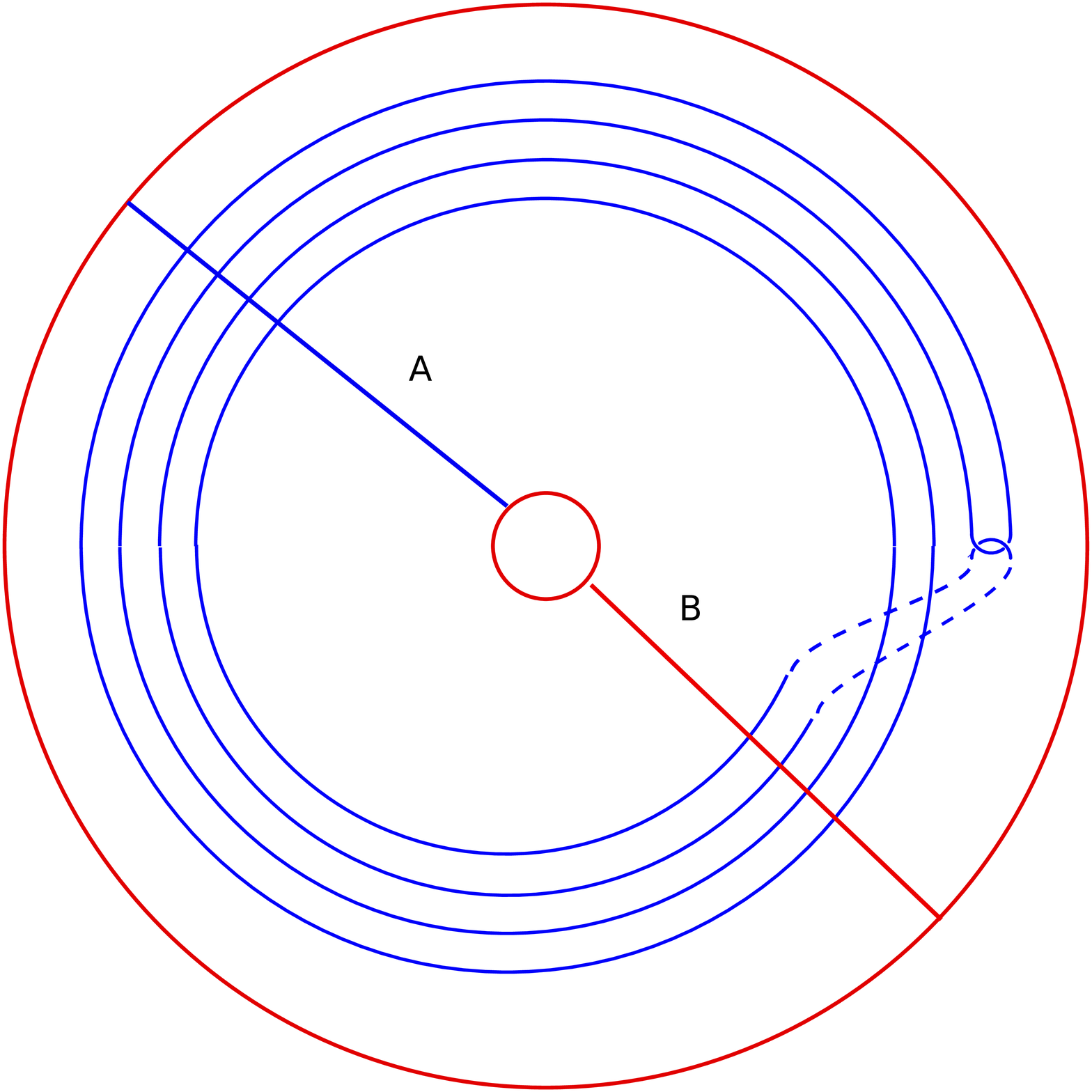}
\caption{%
Interlacing Examples   }%
\label{InterlacingFig}
\end{figure}
    
 A curve in a solid torus is \emph{essential} if it is nontrivial homotopically. 
 The \emph{standard core} for a solid torus $T=S^1\times B^2$ is the simple closed curve $S^1\times \{0\}$. A \emph{core} for the solid torus is any simple closed curve in the torus that is ambiently isotopic to the standard core, fixing the boundary. Equivalently, a core is a simple closed unknotted curve in the interior of  $T$ that has geometric index 1 in $T$ by the remarks after Definition \ref{geometric}.

 \begin{lemma} \label{important lemma}  
 If  $(A,B)$ is a  $k$-interlacing collection of meridional disks with holes for the solid torus $T$ and $J$ is a core for $T$, then $(A, B)$ is an $n$-interlacing of $J$ where $n \ge k$.
\end{lemma}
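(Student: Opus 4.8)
The plan is to show that a $k$-interlacing collection of meridional disks with holes $(A,B)$ in a solid torus $T$ forces at least a $k$-interlacing of points on any core $J$. Since $J$ is a core, by thickening it up we may assume $J = S^1 \times \{0\}$ inside $T = S^1 \times B^2$, so $J$ meets each meridional disk (with holes) in at least one point: indeed, a meridional disk with holes is interior-essential, hence by the Meridional Disk with Holes Theorem its lift to $\widetilde{T} = B^2 \times \mathbb{R}$ separates $\widetilde{T}$ into two unbounded components, which forces the lift of the core (a proper line in $\widetilde{T}$) to cross it. The first step is therefore to pick, for each $A_i$ and each $B_j$, a single intersection point of $J$ with that disk with holes, giving candidate finite sets $A' \subset A \cap J$ and $B' \subset B \cap J$, each with $k$ points.

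The heart of the argument is showing that $(A', B')$ is a $k$-interlacing of points on $J$, i.e., that between consecutive points of $A'$ along $J$ there lies exactly one point of $B'$. Here I would work in the universal cover. Lift everything to $\widetilde{T} = B^2 \times \mathbb{R}$: the core lifts to the line $\widetilde{J} = \{0\} \times \mathbb{R}$, and the lifts $\widetilde{A}_i$, $\widetilde{B}_j$ of the disks with holes are disjoint properly embedded surfaces each separating $\widetilde{T}$ into two unbounded pieces. Because the $A_i$ are disjoint and each separates, their lifts give a $\mathbb{Z}$-invariant family of disjoint separating surfaces, and the crucial combinatorial fact is that the components of $T - A$ correspond to the regions between consecutive lifts modulo the $\mathbb{Z}$-action; the condition that each component of $T - A$ contains exactly one $B_j$ translates to saying that between two consecutive lifts $\widetilde{A}_i$, $\widetilde{A}_{i+1}$ there is exactly one lift of the $B$-family. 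Restricting to the line $\widetilde{J}$, which meets each lifted surface and respects the ordering since the surfaces are disjoint and separating, one reads off that consecutive $\widetilde{A}$-points on $\widetilde{J}$ are separated by exactly one $\widetilde{B}$-point. Projecting back down gives the $k$-interlacing of points, so $(A,B)$ is an $n$-interlacing of $J$ with $n \ge k$ by definition (the inequality, rather than equality, because there may be additional intersection points of $J$ with $A$ and $B$ beyond the ones we chose, and the interlacing number for $J$ is the maximum over all choices).

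The main obstacle I anticipate is making rigorous the claim that the intersections of the core with the disks with holes inherit a well-defined cyclic order compatible with the "component of $T-A$" structure — in other words, that a meridional disk with holes, despite having holes and despite the tubes that can route around, still behaves like an honest meridional disk as far as separating the solid torus and as far as its single forced intersection with a core is concerned. The Meridional Disk with Holes Theorem is exactly the tool for this: it guarantees the lift separates $\widetilde{T}$ into two unbounded components, which is what lets us say the line $\widetilde{J}$ must pass from one side to the other and hence cross, and it lets us order the crossings. Care is needed because $A_i \cap J$ could a priori be a large finite set with the crossings not monotone relative to the separation structure; the disjointness of the lifts of the full $A$-family (they cannot cross each other since the $A_i$ are disjoint in $T$) is what rescues monotonicity. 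Once that ordering is pinned down, the counting is routine.
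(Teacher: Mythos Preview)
Your approach is correct and complete enough as a plan, but it differs from the paper's argument, which stays entirely in $T$ and never lifts to the universal cover.

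The paper argues directly: since each $A_i$ is interior-essential, the core $J$ must meet each of $A_1,\ldots,A_k$. One then looks at the arcs of $J - A$ whose closures have endpoints in \emph{different} components $A_i$, $A_j$. Because (by the interlacing hypothesis) distinct $A_i$ and $A_j$ lie in different components of $T - B$, each such arc must cross $B$. Finally, the cyclic sequence of $A$-labels encountered along $J$ uses all $k$ labels, so there are at least $k$ such ``transition'' arcs; picking one $A$-endpoint and one $B$-crossing from each yields a $k$-interlacing of points on $J$.

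Your universal-cover route trades that combinatorial counting for the structural picture in $\widetilde{T} = B^2 \times \mathbb{R}$: the lifts of the $A_i$ and $B_j$ are disjoint separating surfaces, hence linearly ordered, and the interlacing downstairs forces them to alternate upstairs; the proper line $\widetilde{J}$ must cross them in a way from which alternating points can be extracted. This is a perfectly good alternative and makes the separation properties very explicit via the Meridional Disk with Holes Theorem. The one place where your sketch is slightly soft is the monotonicity step: disjointness of the lifted surfaces orders \emph{the surfaces}, but $\widetilde{J}$ can still cross a single lifted surface several times, so you cannot simply read off alternation from the surface order. The fix is exactly the kind of ``last crossing before moving on'' selection you allude to, and once that is said the argument goes through. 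The paper's downstairs argument sidesteps this entirely by counting transition arcs rather than tracking individual crossings, which is why it is shorter; your approach buys a cleaner conceptual picture at the cost of that extra bookkeeping.
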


{\sc Proof.}  
If $k=0$ or $k=1$ the proof is quite easy.  Each component of $T - A$ contains exactly one meridional disk with holes component of $B$.  Let $J$ be a core for $T$.  Since each disk with holes component of $A$ is interior essential, $J$ must meet each component of $A$.  Let $U$ be a component of $J - A$ so that the endpoints of the closure of $U$ are in different components of $A$. Then $U$ must meet a component of $B$ since each component of $B$ is interior essential.  Since there are at least $k$ such components of $J - A$ with endpoints of the closure  in different components of $A$, $(A,B)$ must be at least a  $k$-interlacing for $J$.  Thus we see that  $(A, B)$ is an $n$-interlacing of $J$ where $n \ge k$.
 \qed

\begin{theorem}[\textbf{Interlacing Theorem for a Solid Torus}] 
If $A$ and $B$ are disjoint compact sets and $T$ is a solid torus, then $(A,B)$ is a $k$-interlacing  of $T$ for some non-negative integer $k$.
\end{theorem}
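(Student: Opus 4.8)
The plan is to mimic the proof of the Interlacing Theorem for a Simple Closed Curve, but replacing ``points on $J$'' with ``meridional disks with holes in $T$'' and ``connected open subsets of $J$'' with suitable ``connected open subregions of $T$''. Concretely, I would argue that if $(A,B)$ could be a $k$-interlacing of $T$ for arbitrarily large $k$, then one could fit arbitrarily many disjoint meridional disks with holes, inside $A$ and inside $B$ respectively, into $T$ in an alternating fashion; compactness of $A$ and $B$ should forbid this.

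First I would dispose of the trivial case: if $A$ (or $B$) contains no meridional disk with holes for $T$, then by definition $(A,B)$ is a $0$-interlacing and we are done. So assume both $A$ and $B$ contain meridional disks with holes. Next, using compactness of the disjoint compact sets $A$ and $B$, cover $A$ by finitely many disjoint open connected sets $U_1,\dots,U_m$ in $T$ and cover $B$ by finitely many disjoint open connected sets $W_1,\dots,W_n$, with all the $U_i$ and $W_j$ mutually disjoint (and disjoint from $\partial T$ if convenient). The heart of the argument is then the claim: if $(A',B')$ is a $k$-interlacing collection of meridional disks with holes with $A'=\cup A_i\subset A$ and $B'=\cup B_i\subset B$, then $k\le\min(m,n)$. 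Granting this claim, $k$ is bounded, so the supremum over all such interlacing collections is achieved at some finite value, which is by definition the interlacing number, and the theorem follows.

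The main obstacle — and the step I would spend the most care on — is proving that bound, i.e.\ that each connected open set $U_i$ can ``absorb'' at most one of the disk-with-holes components $A_i$ without destroying the interlacing structure. For points on a curve this was immediate; here I would need a combinatorial/topological argument. One approach: pass to the universal cover $\widetilde T=B^2\times\mathbb R$ and use the Meridional Disk with Holes Theorem, so each lifted $\widehat{A}_i$ separates $\widetilde T$ into two unbounded pieces; the interlacing condition translates into a strict alternation of lifts of the $\widehat A_i$ and $\widehat B_j$ along the $\mathbb R$-direction. Since each $U_i$ is connected and lifts to (disjoint translates of) a single region, it can meet the ``between consecutive cut levels'' slabs in a controlled way; carefully counting, one finds that having two distinct $A_i$'s meet the same $U_i$ would force a $B_j$ between them to lie in that same connected region, contradicting disjointness of the $U_i$'s and $W_j$'s. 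Alternatively, one can argue more directly in $T$: the components of $T-A'$ are ordered cyclically, and a connected set meeting two different $A_i$'s together with the $B_j$ trapped between them would violate the disjointness of the cover — essentially the same counting as in the simple-closed-curve case, now applied to the cyclic sequence of complementary regions of $A'$ in $T$. Either way, once the bound $k\le\min(m,n)$ is established, the conclusion is immediate, paralleling the earlier proof verbatim.
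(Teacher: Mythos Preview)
Your approach is different from the paper's and can be made to work, but you have over-complicated the key step. The paper's proof is two lines: pick a core curve $J$ of $T$; by Lemma~\ref{important lemma}, any $k$-interlacing collection of meridional disks with holes in $T$ yields at least a $k$-interlacing of $J$, so the torus interlacing number is bounded by the (already finite) interlacing number of $(A,B)$ on $J$. This reduction also sets up exactly the comparison used later in the proof of Theorem~\ref{McMTheorem}.

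Your direct argument is valid, but the ``main obstacle'' you flag has a one-line resolution that you do not quite reach. Once you have disjoint connected open sets $U_1,\dots,U_m$ covering $A$ and disjoint from $B$, use the symmetric form of the interlacing definition (noted right after the definition in the paper): in a $k$-interlacing collection, each component of $T-B'$ contains exactly one $A_i$. Each $U_j$ is connected and disjoint from $B'\subset B$, so it lies in a single component of $T-B'$ and hence can contain at most one $A_i$; this gives $k\le m$ immediately. Your two proposed routes (lifting to the universal cover, or arguing via cyclic ordering) are unnecessary detours, and the universal-cover sketch is not correct as written: if some $U_j$ contains a loop essential in $T$, its lift to $\widetilde T$ is a single unbounded connected set, not ``disjoint translates of a single region,'' so the slab-counting picture breaks down. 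The clean argument above avoids this entirely.
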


{\sc Proof.}   
We just need to show that the interlacing number of $(A,B)$ with respect to $T$ is bounded.  Let $J$ be a core of the solid torus.  The interlacing number of $(A,B)$ with respect to  $T$ is less than or equal to the interlacing number of  $(A,B)$ with respect to $J$ which is well-defined by the Interlacing Theorem for simple closed curves.  
\qed

\section{McMillan Contractible 3-Manifolds Do Not Satisfy the Double 3-space Property}
\label{McMSec2}
%%%%%%%%%%%%%%%%%%%%%%%%%%%%%%%%
There is an alternative generalization of a Whitehead Link that was used by McMillan  \cite{McM62} to show the existence of uncountably many contractible 3-manifolds.  We call these links \emph{McMillan Links}.  

\begin{definition}  
Let $n$ be a positive integer.  A \emph{McMillan Link} of order $n$ is a pair of solid tori $T' \subset T$ so that $T'$ is embedded in $T$ as illustrated    in Figure  \ref{McLinkFig} for a McMillan Link of order 2 and of order $n$. 
\end{definition}

\textbf{Note:} A McMillan Link of order $n$ can be constructed as follows. In an unknotted solid torus $T$ place in the interior a solid torus $T_1$ that has winding number $n$. Then place a solid torus $T_2$ in $T_1$ so that the pair $(T_1, T_2)$ forms a Whitehead Link.
The pair $T_2\subset T$ is then a McMillan Link of order $n$. Since geometric index is multiplicative, the geometric index of  the interior torus in a McMillan Link of order $n$ is seen to be $2n$ in the outer torus of the link.

\begin{definition}   
If $M$ is a genus one 3-manifold with defining sequence $(T_i)$, then we say that $M$ is a \emph{McMillan contractible 3-manifold} if for each $i$, $T_i \subset T_{i+1}$ is a McMillan Link  of order at least 2.
\end{definition}

\begin{figure}[ht!]
     \begin{center}
        \subfigure[McMillan Link: Order 2 and Index 4]{%
            \label{figM:first}
            \includegraphics[width=0.49\textwidth]{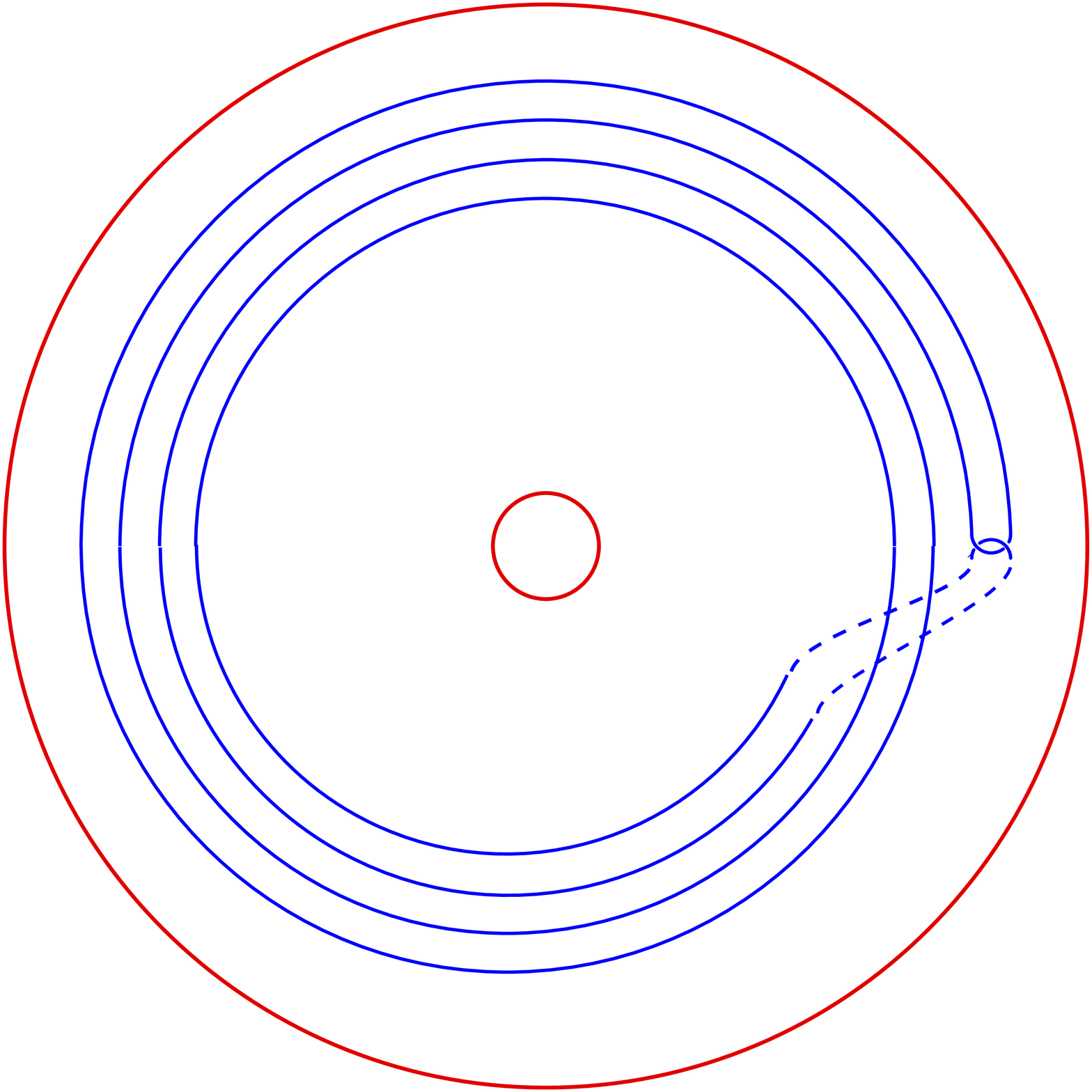}
        }%
        \subfigure[McMillan Link: Order n and Index 2n]{%
           \label{figM:second}
           \includegraphics[width=0.49\textwidth]{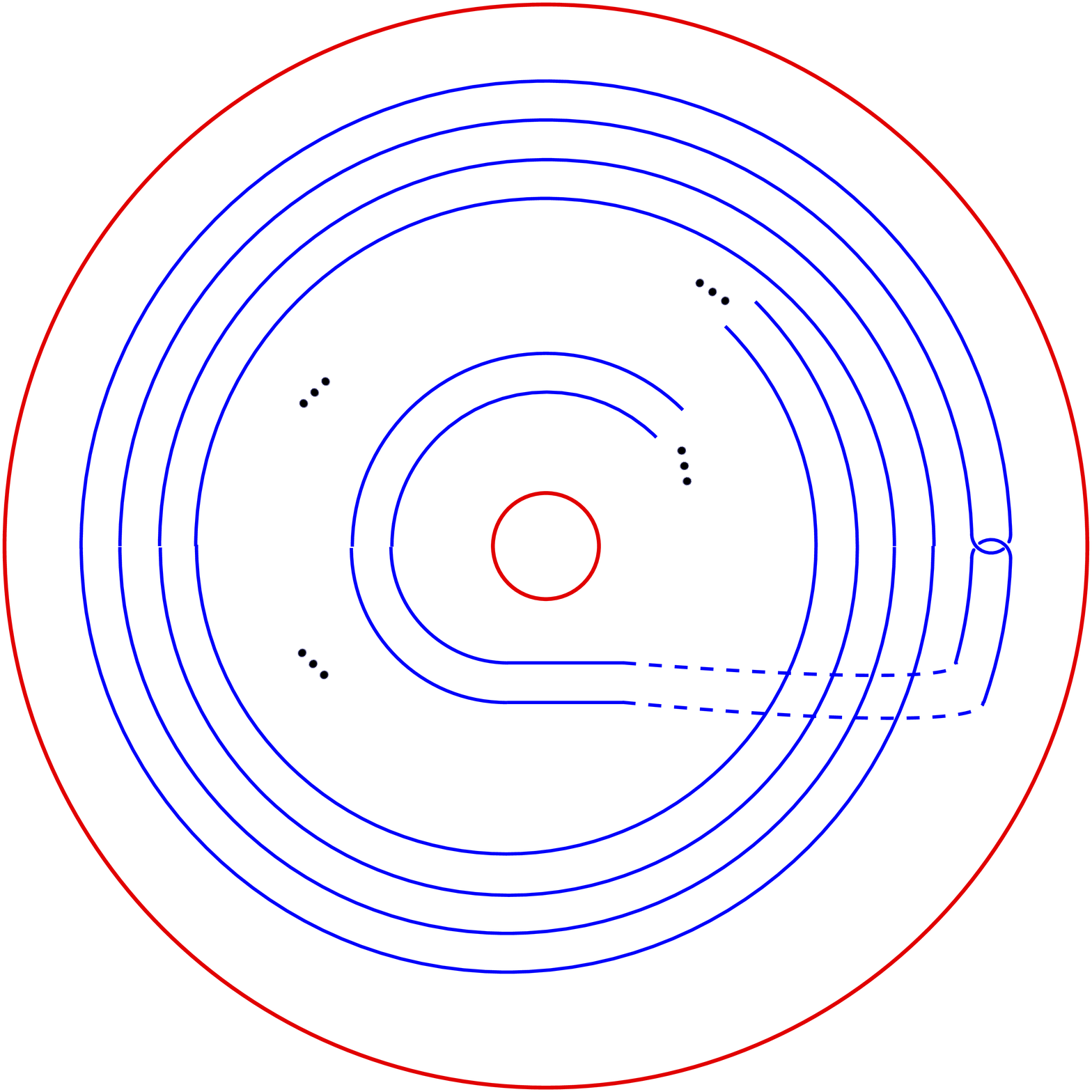}
        }\\ %  ------- End of the first row ----------------------%
    \end{center}
    \caption{%
        McMillan Links     }%
   \label{McLinkFig}
\end{figure}

There are immediate results that follow from the previous section.

\begin{theorem}[\textbf{Interlacing Theorem for a McMillan Link}]  Suppose that $A$ and $B$ are disjoint planar 2-manifolds properly embedded in a solid torus $T$ so that $(A,B)$ is a $k$-interlacing for $T$.  If \  $T'$ is a McMillan Link of order  $n$ in $T$ so that  $T'$ is in general position with respect to $A \cup B$, then  $(A ,B)$ is an $m$-interlacing for $T'$ where $m \ge 2nk -1$.  
\end{theorem}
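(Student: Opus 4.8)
The plan is to use the way a McMillan Link is built to factor the statement into two steps. By the Note following the definition of a McMillan Link, write $T'\subset T_1\subset T$, where $T_1$ is an unknotted solid torus of winding number $n$ in $T$ — which we may take to have geometric index $n$ in $T$ — and $(T_1,T')$ is a Whitehead Link; after a small isotopy of $T_1$ (and of $T'$) supported near $\partial T_1$ we may assume $\partial T_1$, $A$ and $B$ are in general position, so $A\cap T_1$ and $B\cap T_1$ are again disjoint planar $2$-manifolds properly embedded in $T_1$. Fix, by hypothesis, a $k$-interlacing collection of meridional disks with holes $A_1,\dots,A_k\subset A$, $B_1,\dots,B_k\subset B$ for $T$. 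I would prove: (i) there is an $nk$-interlacing collection of meridional disks with holes for $T_1$ lying inside $A$ and $B$; and (ii) given a $j$-interlacing collection of meridional disks with holes for a solid torus $S$ lying inside $A$ and $B$, when $(S,S')$ is a Whitehead Link there is a $(2j-1)$-interlacing collection of meridional disks with holes for $S'$ lying inside $A$ and $B$. Applying (i) and then (ii) with $S=T_1$, $S'=T'$, $j=nk$ exhibits a $(2nk-1)$-interlacing collection for $T'$, hence $m\ge 2nk-1$.

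For (i), pass to the universal cover $p\colon\widetilde T=B^2\times\RR\to T$. By the Meridional Disk with Holes Theorem each $A_i$, $B_i$ lifts to a wall separating $\widetilde T$ into two unbounded pieces, and since the $A_i,B_i$ interlace in $T$ the full preimages of $\bigcup A_i$ and $\bigcup B_i$ form a locally finite system of disjoint walls which, read along $\RR$, strictly alternate $A$-wall, $B$-wall, $A$-wall,$\dots$, with $k$ of each per deck period. A core $c_1$ of $T_1$ has geometric index $n$, so it may be isotoped inside $T_1$ so that one lift $\widetilde c_1$ rises monotonically through $n$ deck periods, meeting $nk$ $A$-walls and $nk$ $B$-walls in alternating order, all within a single fundamental domain of the covering $\widehat N\to T_1$, where $\widehat N$ is the component of $p^{-1}(T_1)$ containing $\widetilde c_1$ (so $\widehat N\cong B^2\times\RR$ is the universal cover of $T_1$). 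Pushing forward by $p$ the component of each of these $2nk$ walls that $\widetilde c_1$ actually meets yields disjoint disks with holes $\alpha_1,\dots,\alpha_{nk}\subset A$, $\beta_1,\dots,\beta_{nk}\subset B$; each is a meridional disk with holes for $T_1$ since it lifts to a wall separating $\widehat N=\widetilde{T_1}$ into two unbounded pieces (Meridional Disk with Holes Theorem), and, because the walls alternate along $\widehat N$ with $nk$ of each type per fundamental domain, $\{\alpha_i\},\{\beta_i\}$ is an $nk$-interlacing collection for $T_1$.

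For (ii) — the heart of the argument, and essentially the interlacing step of \cite{Wri89} — note that a core $c'$ of $S'$ is freely homotopic in $S$ to a core $c_1$ of $S$ run forwards and then backwards with a clasp interposed, so $c'$ crosses the $j$-interlacing system carried by $A$ and $B$ in a cyclic pattern which is $(AB)^j$ on the forward run and $(BA)^j$ on the return, joined through the clasp. Choosing a meridional disk $D$ of $S$ meeting $c'$ in the minimal two points and in general position with $A\cup B$, and cutting $S$ along $D$, one exhibits $S'$ as two clasped tubes and cuts the $j$ interior‑essential disks with holes of $S$ lying in $A$ (respectively $B$) into surfaces; from the intersections of these with the two tubes one extracts disks with holes which, after $D$ is reglued, are non‑separating — hence interior‑essential — in $S'$. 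The forward and the return run each contribute $j$ such disks of each colour, but the interposed clasp obstructs exactly one of these constructions for each colour (equivalently, it spoils exactly one $A$-adjacency and one $B$-adjacency in the resulting cyclic order), so that a genuine $(2j-1)$-interlacing collection of meridional disks with holes survives inside $A$ and inside $B$.

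The winding‑number step (i) is a routine covering‑space computation; the main obstacle is (ii), and within it the bookkeeping at the clasp. One must verify that the surfaces extracted inside the two tubes are genuinely interior‑essential in $S'$ — the lifting criterion of the Meridional Disk with Holes Theorem is the right tool, since a careless choice produces boundary‑parallel disks cutting off the cap created by the clasp — and, crucially, that the clasp costs exactly one disk of each colour: not zero (which would produce a $2j$-interlacing collection for $S'$ and hence, by Lemma \ref{important lemma}, force the interlacing number of a core of $S'$ to be $2j$ rather than the $2j-1$ allowed by the forward‑and‑back pattern) and not two (which would break the bound $m\ge 2nk-1$). Tracing the effect of the clasp on a collection of properly embedded disks, rather than on a single core curve, together with the standard but delicate cleanup of the general‑position curves of intersection, is where the argument of \cite{Wri89} must be adapted with care.
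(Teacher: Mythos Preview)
Your strategy --- reduce to the Whitehead step via the winding-number structure of the McMillan link --- is exactly the paper's, but the execution differs in an instructive way. The paper does not introduce the intermediate torus $T_1$ at all: instead it passes to the $n$-fold cover $p:\widetilde T\to T$, observes that the full preimage $(\widetilde A',\widetilde B')=(p^{-1}(A'),p^{-1}(B'))$ is automatically an $nk$-interlacing collection of meridional disks with holes for $\widetilde T$ (each lift of a meridional disk with holes is again one, by the Meridional Disk with Holes Theorem), lifts the inclusion $T'\hookrightarrow T$ to $\widetilde i:T'\to\widetilde T$ (possible since $T'$ is contractible in $T$), checks that $T''=\widetilde i(T')$ is a Whitehead link in $\widetilde T$, and then simply invokes \cite[Lemma~A10]{Wri89} to get the bound $m\ge 2nk-1$. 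Your route keeps everything downstairs: you manufacture the $nk$-interlacing inside $T_1\subset T$ by lifting to the universal cover and extracting components of the wall--$\widehat N$ intersections meeting a monotone lift of the core. This is correct (the simple connectivity of $\widehat N$ forces each such component to separate, and the deck-translation argument gives disjointness and embeddedness), but it is noticeably more work than the paper's one-line ``take preimages in the $n$-fold cover.'' Conversely, your step~(ii) tries to re-argue Lemma~A10 of \cite{Wri89} at the level of disks rather than curves, whereas the paper treats that lemma as a black box; your sketch is plausible but, as you yourself flag, the clasp bookkeeping is the genuinely delicate part and is not fully carried out here. In short: same architecture, but the paper's $n$-fold-cover shortcut replaces your step~(i) entirely, and the paper outsources your step~(ii) to the cited reference.
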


{\sc Proof.} 
Let $p: \widetilde{T} \to T$ be the projection map from the $n$-fold cover of $T$.  Since $(A,B)$ is a $k$-interlacing for $T$, there exist disjoint meridional disks with holes, $A_1, A_2, \dots, A_k$ and $B_1, B_2, \dots, B_k$ with $ A' = \cup_{i=1}^k A_i \subset A$ and $ B' = \cup_{i=1}^k B_i \subset B$ so that $(A',B')$ is a $k$-interlacing collection of  meridional disks with holes for $T$.  Set $\widetilde{A}' = p^{-1}(A')$ and  $\widetilde{B}' = p^{-1}(B')$.  Using the Meridional Disk with Holes Theorem, we see that  $(\widetilde{A}',\widetilde{B}')$ is an $nk$-interlacing collection of meridional disks with holes for $\widetilde{T}$.  Let $\widetilde{i} : T' \to \widetilde{T}$ be a lift of the inclusion map $i : T' \to T$. Then $T'' =\widetilde{i}(T')$ is a Whitehead Link in $\widetilde{T}$. By \cite[Lemma A10]{Wri89}  $(\widetilde{A}',\widetilde{B}')$ is an $m$-interlacing of $T''$ where $m \ge 2nk-1$. It now follows that $(A,B)$ is an $m$-interlacing for $T'$ for $m \ge 2nk-1$.
\qed

\textbf{Note:} 
To see a simple illustration of this theorem, consider the second figure in Figure \ref{InterlacingFig}. $(A,B)$ is a 1-interlacing for the outer torus $T$. The inner torus $T^\prime$ is a McMillan Link of order 2 in $T$. In the double cover of $T$, $T^\prime$ lifts to a Whitehead Link. Note that $(A,B)$ is a 3-interlacing for $T^\prime$.

\begin{corollary} \label{corollary}   
In the previous theorem, if 
$k\geq 1$, and the McMillan Link $T^{\prime}\subset T$ is of order at least 2, then $m > k$. \end{corollary}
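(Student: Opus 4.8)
This is an immediate arithmetic consequence of the Interlacing Theorem for a McMillan Link, so the plan is simply to substitute the hypotheses into the inequality it provides. First I would invoke that theorem: since $(A,B)$ is a $k$-interlacing for $T$ and $T'\subset T$ is a McMillan Link of order $n$ in general position with respect to $A\cup B$, the pair $(A,B)$ is an $m$-interlacing for $T'$ with $m\ge 2nk-1$.

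Next I would use the two numerical hypotheses. The order-at-least-$2$ assumption gives $n\ge 2$, and $k\ge 1$ by assumption, so $2nk-1\ge 4k-1$. Hence $m\ge 4k-1$.

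Finally I would compare with $k$: for every integer $k\ge 1$ we have $4k-1-k=3k-1\ge 2>0$, so $m\ge 4k-1>k$, which is exactly the claimed inequality $m>k$. The only thing worth remarking is that the strictness is what matters here — the theorem already yields $m\ge 2nk-1$, and the point of the corollary is to record that under the stated hypotheses this forces the interlacing number to \emph{strictly} increase when passing from $T$ to $T'$, which is the monotonicity that will drive the later obstruction argument. There is no real obstacle; the entire content is the chain $m\ge 2nk-1\ge 4k-1>k$. \qed
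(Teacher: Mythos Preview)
Your proposal is correct and is exactly the intended argument: the paper states the corollary without proof, treating it as an immediate arithmetic consequence of the inequality $m\ge 2nk-1$ from the Interlacing Theorem for a McMillan Link. Your chain $m\ge 2nk-1\ge 4k-1>k$ under $n\ge 2$, $k\ge 1$ is precisely the verification being left to the reader.
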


We now prove some lemmas that are needed in proving that McMillan contractible 3-manifolds do not have the double 3-space property.

\begin{lemma} \
Let $H$ be a properly embedded 2-manifold in a solid torus $T$ so that each component of $H$ is an interior-inessential disk with holes.  Then there is an essential simple closed curve in $T$ that misses $H$.
\end{lemma}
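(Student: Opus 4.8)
The plan is to split into two cases according to whether the curve system $\partial H\subset\partial T$ contains a meridian of $T$. Recall that disjoint essential simple closed curves on the torus $\partial T$ are mutually parallel, so the essential components of $\partial H$ all share a single slope. \emph{Case 1: no component of $\partial H$ is a meridian of $T$.} If $\partial H$ has an essential component $c$, then $c$ has an annular neighborhood in $\partial T$ missing $\partial H\setminus c$, and a parallel push-off $J$ of $c$ inside it is a simple closed curve in $\partial T\setminus\partial H$ of the same, non-meridional, slope. If instead every component of $\partial H$ bounds a disk in $\partial T$, then $\partial T$ still carries a longitudinal curve $J$ disjoint from these finitely many disks, hence from $\partial H$. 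In either case $J$ has non-meridional slope on $\partial T$, so is essential in $T$; and $J$ is disjoint from $H$ since $J\subset\partial T$ misses $\partial H=H\cap\partial T$ while $H$ meets $\partial T$ only along $\partial H$. So $J$ works, and the remaining case is the real content.

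\emph{Case 2: some component of $\partial H$ is a meridian of $T$.} Then every essential component of $\partial H$ is a meridian, so every component of $\partial H$ is null-homotopic in $T$. Since $\pi_1$ of a disk with holes is generated by its boundary loops, the inclusion of each component $H'$ of $H$ into $T$ is trivial on $\pi_1$ and hence lifts, carrying $H'$ homeomorphically onto a compact surface $\widehat{H}'$ in the universal cover $p\colon\widetilde{T}=B^2\times\mathbb{R}\to T$. Because $H'$ is interior-inessential, the Meridional Disk with Holes Theorem gives that $\widehat{H}'$ does \emph{not} separate $\widetilde{T}$ into two unbounded components. On the other hand, since the inclusion $\partial H'\hookrightarrow\partial T$ extends over $H'$ we have $[\partial H']=0$ in $H_1(\partial T)$, so the meridian curves of $\partial H'$ occur with canceling orientations and $[\partial\widehat{H}']=0$ in $H_1(\partial\widetilde{T})$; as $H_2(\widetilde{T})=0$, this forces $\widehat{H}'$ to separate $\widetilde{T}$, and, being connected, it cuts off exactly one bounded region $R'$.

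Put $N'=\overline{R'}\cup\widehat{H}'$, a compact subset of $B^2\times\mathbb{R}$. I claim each $H'$ lies in a ball of $T$. The nontrivial deck translates of $\widehat{H}'$ are the remaining components of $p^{-1}(H')$, hence disjoint from $\widehat{H}'$, so $N'$ and its translates are pairwise disjoint or nested; nesting is impossible for the deck translates of a fixed compact set (it would produce an infinite strictly nested chain of compact sets escaping to infinity). Thus $N'$ is disjoint from all its nontrivial deck translates, so $p|_{N'}$ is injective, and, being compact in $B^2\times\mathbb{R}$, $N'$ lies in a ball of $\widetilde{T}$. Passing to the maximal regions among all the $N'$ and their deck translates — a deck-invariant, locally finite, laminar family with only finitely many orbits — and thickening them slightly to remain disjoint from their translates, we obtain finitely many pairwise disjoint $3$-balls $B_1,\dots,B_r\subset T$ with $H\subset\bigcup_j B_j$. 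Since removing finitely many disjoint $3$-balls from $T$ leaves the inclusion-induced map on $\pi_1$ surjective, $T\setminus\bigcup_j B_j$ contains a simple closed curve essential in $T$, and this curve misses $H$.

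The main obstacle is Case 2, and inside it the step upgrading ``interior-inessential'' to ``contained in a ball of $T$'': this uses both the Meridional Disk with Holes Theorem and the homological observation that $\widehat{H}'$ must separate $\widetilde{T}$, together with the bookkeeping that the bounded complementary regions in the cover, and all of their deck translates, can be made simultaneously disjoint so as to project to disjoint balls in $T$. Case 1 is routine, needing only that after deleting finitely many disk-bounding curves the torus $\partial T$ still carries essential curves of every slope.
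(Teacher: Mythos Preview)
Your approach is quite different from the paper's, and Case~1 is fine, but Case~2 contains a real gap.

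The paper's argument is a short, direct surgery on a curve: start with any essential simple closed curve $J$ in general position with $H$; for a component $H'$ meeting $J$, interior-inessentiality of $H'$ forces the algebraic intersection number $J\cdot H'$ to vanish (since $\partial H'$ is homologically trivial in the complement of $J$), so there are two intersection points of opposite sign; join them by an arc in $H'$, perform the cut-and-paste, keep whichever of the two resulting curves is essential, and push off $H'$ to reduce $|J\cap H|$ by two. Induction finishes it. No covering spaces, no case split on boundary slopes.

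In your Case~2 the problematic step is the sentence ``thickening them slightly to remain disjoint from their translates, we obtain finitely many pairwise disjoint $3$-balls $B_1,\dots,B_r\subset T$.'' Up to that point your argument is essentially sound: each lift $\widehat{H}'$ separates $\widetilde T$, the bounded side $N'$ is compact, the collection of all such $N'$ and their deck translates is laminar, translates of a fixed $N'$ cannot nest (so $p|_{N'}$ is injective), and the maximal elements are pairwise disjoint with finitely many orbits. But nothing you have shown forces $N'$---or a slight thickening of it---to be a $3$-ball. The frontier of $N'$ in $\widetilde T$ is $\widehat{H}'$ together with a region of $\partial\widetilde T$, and when some boundary circles of $\widehat{H}'$ are meridians of $\partial\widetilde T$, that region can be an annulus rather than a union of disks; then $\partial N'$ is a torus, not a sphere, and $N'$ is a solid torus, not a ball. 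Nor does ``$N'$ lies in a ball of $\widetilde T$'' help: that ball may have height exceeding the translation length and hence fail to project injectively to $T$, and there is no reason such balls for different maximal $N'$ can be chosen disjoint. Without genuine $3$-balls your final sentence (``removing finitely many disjoint $3$-balls from $T$ leaves $\pi_1$ surjective'') does not apply; and it is not true in general that removing finitely many disjoint compact $\pi_1$-trivial sets from $T$ leaves an essential simple closed curve---a single meridional disk already kills $\pi_1$ of the complement. One can try to rescue the argument by showing directly that the deck-invariant open set $\widetilde T\setminus\bigcup_\alpha N_\alpha$ still connects the two ends of $\widetilde T$ by a $\tau$-periodic embedded line, but that is additional work you have not done, and at that point the route through $N'$ is no longer buying you much over the paper's two-line surgery.
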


{\sc Proof.} 
Let $J$ be an oriented essential simple closed curve in $T$ that is in general position with respect to $H$.  We assume that $T$ lies in $\mathbb{R}^3$ . The proof is by induction on the number of points in $J \cap H$.  Consider a component $H'$ of $H$ that meets $J$ and let $z=\partial H^\prime$.  Since $H^\prime$ is interior-inessential, $z$ must be homologically trivial in $R^3-J$. Choose an orientation on $H'$.  Since $z$ is homologically trivial in $R^3-J$, the algebraic intersection number of $J$ and $H'$ is zero (meaning that there are the same number of positive and negative intersections). See for example the material on intersection numbers in \cite{Dol95} or \cite{SeTh80}.

 Let $p, q \in J \cap H'$ be points with opposite orientations.  The points $p$ and $q$ separate $J$ into two components $J_1$ and $J_2$.  Let $A$ be an arc in $H'$ between $p$ and $q$ that misses all other points of $J \cap H'$.  Then $J_1 \cup A$ and $J_2 \cup A$ are simple closed curves.  If $J_1 \cup A$ and $J_2 \cup A$ are both inessential in $T$, then so is $J$, so at least one of  $J_1 \cup A$ and $J_2 \cup A$ is essential in $T$.  We suppose that $J' =  J_1 \cup A$ is essential in $T$.  Using a collar on $H'$, we can push $J_1$ off $H$ to get an essential simple closed curve $J''$ that meets $H$ in two fewer points than $J$.
\qed

\begin{lemma} \label{meridional disk with holes}
Let $M$ be a 3-manifold so that $M = U \cup V$ where $U$, $V$ are homeomorphic to $\mathbb R^3$.  Let $T \subset M$ be a solid torus so that for every essential simple closed curve $J \subset T$, $J \not\subset U$ and $J \not\subset V$.   Let $C = M - U$ and $D = M - V$.  Then any neighborhood of  $T \cap C$ in $T$  contains a meridional disk with holes.
\end{lemma}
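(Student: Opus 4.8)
The plan is to argue by contradiction: suppose some open neighborhood $W$ of $T\cap C$ in $T$ contains no meridional disk with holes, and let us produce an essential simple closed curve of $T$ lying in $U$ — that is, missing $C$ — contrary to hypothesis. Observe first that $C$ and $D$ are disjoint closed subsets of $M$ (indeed $C\cap D=M\setminus(U\cup V)=\emptyset$), with $C\subset V$ and $D\subset U$. Since $T\cap C$ is compact, disjoint from the closed set $D$, and contained in $W$, choose a compact PL $3$-submanifold $N$ of $T$ with $T\cap C\subset{\rm Int}\,N\subset N\subset W$, with $N\cap D=\emptyset$, and whose frontier $F$ in $T$ is a properly embedded PL $2$-manifold; so $N$ separates $T\cap C$ from $T\cap D$ in $T$.

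The first step is that every essential simple closed curve $J\subset T$ meets $F$: since $J\not\subset U$, the curve $J$ meets $C$, hence meets ${\rm Int}\,N$; since $J\not\subset V$, it meets $D$, hence meets $T\setminus N$; as $J$ is connected, $J\cap F\neq\emptyset$. Thus no essential simple closed curve of $T$ misses $F$, so by the contrapositive of the preceding lemma — the proof of which uses only that the components of the separating $2$-manifold are interior-inessential, not that they are disks with holes — some component $S$ of $F$ is interior-essential. If $S$ is a disk with holes we are done: $S\subset N\subset W$ is then a meridional disk with holes contained in $W$, the desired contradiction.

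It therefore remains to exclude the possibility that every interior-essential component of $F$ is non-planar, and this is the step I expect to be the real obstacle. The plan is to normalize $F$ within $W$: discard $2$-sphere components (each bounds a ball, since $T$ is irreducible) and closed components, and compress the rest; since the meridional boundary class $[\partial S]\in H_1(\partial T)$ of a properly embedded surface is unchanged by compression, one can keep an interior-essential component present at every stage, and an interior-essential surface in a solid torus that is incompressible must be a meridian disk, which is a meridional disk with holes. The difficulty is that a compressing disk for a component of $F$ need not lie in $W$, so the surgeries have to be confined to $W$; this is the point at which one should use the full hypothesis on $M=U\cup V$, and not merely its homological consequence that $\pi_1\!\big(T\setminus(T\cap C)\big)\to\pi_1(T)$ is trivial. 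That triviality already lets a degree-one map $T\to S^1$, restricted to $T\setminus(T\cap C)$, be lifted to a map $g$ into $\mathbb R$; using compactness of $T\setminus W$ to find a value of $g$ attained only inside $W$, the corresponding level set of $g$ closes up (adding limit points in $T\cap C$) to a meridional disk with holes lying in $W$. Making that last point precise — the choice of the level and the verification that the level set is a genuine meridional disk with holes — is, I expect, the technical heart of the argument.
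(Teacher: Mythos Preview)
Your outline founders precisely where you suspect, and the paper's remedy is simpler than either of the repairs you sketch. By taking the frontier $F$ of an arbitrary PL neighborhood $N\subset T$ of $T\cap C$, you have no control over the genus of the components of $F$; you then correctly observe that an interior-essential but non-planar component of $F$ is not a meridional disk with holes. Your compression fix does not work as stated: a compressing disk for $F$ lies in $T$, not in $W$, and there is no mechanism to confine surgeries to $W$. Your degree-one-map idea is not a proof either: a level set of a map $g:T\setminus(T\cap C)\to\mathbb R$ is not, without more, a properly embedded surface in $T$, and ``closing it up'' by adding limit points in $T\cap C$ will typically produce something wild, not a disk with holes.

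The idea you are missing is to exploit $U\cong\mathbb R^3$ directly to manufacture a \emph{sphere}. Set $K=T\setminus W$ (after shrinking $W$ so that $W\cap D=\emptyset$); then $K$ is compact and misses $C=M\setminus U$, so $K\subset U$. Since $U\cong\mathbb R^3$, $K$ lies in a $3$-ball $B\subset U$ with boundary $2$-sphere $S$, which we put in general position with respect to $T$. Now $S\cap T\subset T\setminus K=W$, and because $S$ is a sphere, every component of $S\cap T$ is automatically a disk with holes. One argues that $S$ is not contained in $\operatorname{Int}T$ (else the ball $S$ bounds in $T$ would force an essential curve into $U$ or $V$), so $S\cap T$ is a nonempty properly embedded planar $2$-manifold; then your own paragraph~1 argument (every essential curve meets $S\cap T$, hence by the preceding lemma some component is interior-essential) finishes the proof. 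Replacing your frontier $F$ by the sphere trace $S\cap T$ is the whole trick: it kills the non-planarity problem at the source rather than trying to repair it afterwards.
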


{\sc Proof.} 
Notice that by DeMorgan's Law, $C \cap D = \emptyset$.  Since $T \not\subset U$ and $T \not\subset V$, then $C' =T \cap C \ne \emptyset$ and $D' = T \cap D \ne \emptyset$. So $C'$ and $D'$ are disjoint non-empty compact subsets of $T$.  Let $N$ be an open neighborhood of $C'$ in $T$ that misses $D'$.
Let $K = T - N$.  Then $K$ is a compact set in $U$ that contains $D'$.  Since $U$ is homeomorphic to $\mathbb R^3$, $K$ is contained in the interior of a 3-ball $B \subset U$ with boundary a 2-sphere $S$ that we may suppose is in general position with respect to $T$.  Notice that $C'$ and $D'$ are in separate components of $M-S$ and so $S \cap T = \emptyset$ is impossible.  

Also, $S \subset {\rm Int}\ T$ is impossible because this would allow for an essential simple closed curve in $T$ that would lie in either $U$ or $V$.  
To see this, note that $S \subset {\rm Int}\ T$ implies that $S$ bounds a 3-cell $B^{\,\prime}$ in $T$. 
Either this 3-cell is equal to $B$ and contains $D^{\,\prime}$, or it is the complement in $M$ of $\text{int}(B)$ and contains $C^{\,\prime}$. In either case, a longitudinal essential curve in $T$ misses 
$C^{\,\prime}$ or $D^{\,\prime}$ and is thus contained in $U$ or $V$ which can't happen.

Thus the 
 set  $H = S \cap T \ne \emptyset$ lies in the neighborhood $N$ of $C'$, and each component of $H$ is a disk with holes.  If each component is interior-inessential, then, by the previous lemma, there is an essential simple closed curve $J$ in $T$ that misses $H$.  So $J$ lies in a component of $M-S$ and must miss either $C$ or $D$.  So $J \subset U$ or $J \subset V$ which is a contradiction.  Thus at least one of the components of $H$ must be interior-essential and thus a meridional disk with holes.
\qed

\begin{theorem}\label{McMTheorem}  No McMillan contractible 3-manifold $M$ can be expressed as  the union of two copies of $\mathbb R^3$.
\end{theorem}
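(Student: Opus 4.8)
The plan is to argue by contradiction. Suppose $M = U \cup V$ with $U$ and $V$ both homeomorphic to $\mathbb{R}^3$, and fix a defining sequence $(T_i)$ realizing $M$ as a McMillan contractible $3$-manifold, so that $T_i \subset {\rm Int}\ T_{i+1}$ is a McMillan link of order $n_{i+1} \ge 2$ and geometric index $2n_{i+1} \ge 4$; in particular $M$ is a genus one $3$-manifold, so Theorem \ref{essential} applies to it. Put $C = M - U$ and $D = M - V$: disjoint closed subsets of $M$, disjoint because $U \cup V = M$. Each $T_i$ contains an essential simple closed curve, and by Theorem \ref{essential} no such curve lies in $U$ or in $V$; hence $T_i \not\subset U$ and $T_i \not\subset V$, so $C \cap T_i$ and $D \cap T_i$ are disjoint nonempty compact sets, and $T_i$ — also with the roles of $U$ and $V$ exchanged — satisfies the hypothesis of Lemma \ref{meridional disk with holes}. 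I will use the following refinement, read off from the proof of that lemma: for each $i$ and each prescribed neighborhood $W$ of $C \cap T_i$ in $T_i$, there is a $2$-sphere $S \subset U$ in general position with respect to $\partial T_i$ such that $S$ bounds a $3$-ball with $C \cap T_i$ in its exterior and $D \cap T_i$ in its interior, $S \cap T_i \subset W$, and $S \cap T_i$ has at least one interior-essential component; the symmetric statement holds with $U$, $C$ replaced by $V$, $D$ and a sphere $S' \subset V$.

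The contradiction comes from estimating a single interlacing number in two incompatible ways. For the upper bound, fix open neighborhoods $\mathcal U \supset C \cap T_0$ and $\mathcal V \supset D \cap T_0$ in $T_0$ with disjoint closures; by the Interlacing Theorem for a Solid Torus, $(\overline{\mathcal U}, \overline{\mathcal V})$ is a $\kappa$-interlacing for $T_0$ with $\kappa$ finite, and therefore (by the obvious monotonicity of interlacing under enlargement of the sets, the solid-torus analogue of the Note following the definition of interlacing for a simple closed curve) every disjoint compact pair $(E, F)$ with $E \cap T_0 \subset \mathcal U$ and $F \cap T_0 \subset \mathcal V$ has interlacing number at most $\kappa$ relative to $T_0$. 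For the lower bound, fix a large integer $N$. Applying the refinement above to $T_{N-1}$, I produce spheres $S \subset U$ and $S' \subset V$ in general position with respect to all of $\partial T_0, \dots, \partial T_{N-1}$, with $S \cap T_{N-1}$ and $S' \cap T_{N-1}$ disjoint planar $2$-manifolds each having an interior-essential component; shrinking the neighborhoods of $C \cap T_{N-1}$ and $D \cap T_{N-1}$ if necessary, I further arrange that for every $i \le N-1$ the surface $S \cap T_i$ lies in an arbitrarily small neighborhood of $C \cap T_i$ and $S' \cap T_i$ in an arbitrarily small neighborhood of $D \cap T_i$, so that $S \cap T_i$ and $S' \cap T_i$ are disjoint inside each $T_i$, and in particular $S \cap T_0 \subset \mathcal U$ and $S' \cap T_0 \subset \mathcal V$.

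Write $\kappa_i$ for the interlacing number of $(S \cap T_i,\, S' \cap T_i)$ relative to $T_i$. Since $S \cap T_{N-1}$ and $S' \cap T_{N-1}$ each contain a meridional disk with holes in $T_{N-1}$, the pair $(S \cap T_{N-1}, S' \cap T_{N-1})$ is not a $0$-interlacing, so $\kappa_{N-1} \ge 1$. For $0 \le i \le N-2$, I apply the Interlacing Theorem for a McMillan Link with $T = T_{i+1}$, with $T' = T_i$ the McMillan link of order $n_{i+1} \ge 2$ inside it, and with the disjoint properly embedded planar $2$-manifolds $S \cap T_{i+1}$ and $S' \cap T_{i+1}$; since a meridional disk with holes of $T_i$ contained in $S \cap T_{i+1}$ is automatically contained in $S \cap T_i$, this yields $\kappa_i \ge 2 n_{i+1} \kappa_{i+1} - 1 \ge 4\kappa_{i+1} - 1$. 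Iterating Corollary \ref{corollary} from level $N-1$ down to level $0$ then forces $\kappa_0 \ge (2 \cdot 4^{\,N-1} + 1)/3$, which grows without bound as $N \to \infty$. But $\kappa_0 \le \kappa$ by the upper bound, with $\kappa$ independent of $N$, so a large enough $N$ yields the desired contradiction, and hence $M$ is not the union of two copies of $\mathbb{R}^3$.

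I expect the delicate point to be the inequality $\kappa_{N-1} \ge 1$: to be rigorous one must actually exhibit disjoint meridional disks with holes $A_1 \subset S \cap T_{N-1}$ and $B_1 \subset S' \cap T_{N-1}$ forming a $1$-interlacing collection — that is, with $T_{N-1} - A_1$ connected and $B_1$ lying in that complement — rather than merely knowing each surface has an interior-essential component. This is a matter of meridional-disk-with-holes bookkeeping, handled via the Meridional Disk with Holes Theorem and the parity of the meridional count of an interior-essential disk with holes (a component of $S \cap T_{N-1}$ separates $T_{N-1}$ exactly when that count is even); in the degenerate case where every interior-essential component of $S \cap T_{N-1}$ separates $T_{N-1}$, one repairs matters by choosing the sphere $S$ — equivalently, enlarging the neighborhood $W$ of $C \cap T_{N-1}$ — so that some interior-essential component becomes non-separating. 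The remaining points — taking the neighborhoods nested and small enough that a single pair of spheres behaves well simultaneously at all the finitely many levels $0 \le i \le N-1$, and general position with respect to the finitely many boundary tori — are routine.
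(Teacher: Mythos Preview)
Your argument is correct and follows essentially the same route as the paper: set $C=M-U$, $D=M-V$, use Lemma~\ref{meridional disk with holes} to produce meridional disks with holes near $C$ and $D$ in a deep torus, then iterate the Interlacing Theorem for a McMillan Link inward to force the interlacing in $T_0$ to exceed a fixed finite bound. The only structural difference is in how that fixed bound is obtained: the paper passes to a core curve $J\subset T_0$, invokes the Neighborhood Interlacing Theorem for Simple Closed Curves to fix the target number $n$, and at the end uses Lemma~\ref{important lemma} to transfer the solid-torus interlacing back to $J$; you instead bound directly by the solid-torus interlacing of $(\overline{\mathcal U},\overline{\mathcal V})$ and use monotonicity. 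Both work.

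Your worry about $\kappa_{N-1}\ge 1$ is unnecessary, and your proposed ``repair'' is misguided. The paper asserts the same fact without comment, and it is automatic: a meridional disk with holes is always non-separating in $T$. Indeed, by the Meridional Disk with Holes Theorem its lift separates $\widetilde T=B^2\times\mathbb R$ into two \emph{unbounded} components, so the lifted core line $\{0\}\times\mathbb R$ must pass from one to the other and hence meets the lift an odd number of times; projecting, the mod~$2$ intersection with the core of $T$ is nonzero, which is exactly non-separating. Thus any two disjoint meridional disks with holes already form a $1$-interlacing collection, and the ``degenerate case'' you describe cannot occur.
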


{\sc Proof.} 
Let $T_i$ be a defining sequence for $M$ so that  $\displaystyle M = \cup_{i=0}^\infty T_i$ .
Suppose $M = U \cup V$ where $U$, $V$ are homeomorphic to $\mathbb R^3$.  Then by Theorem \ref{essential}, for each essential simple closed curve $J' \subset T_i$, $J' \not\subset U$ and $J' \not\subset V$.  Let $C = M - U$ and $D = M - V$. Then by Lemma \ref{meridional disk with holes}, for each $i$, each neighborhood of $T_i \cap C$ in $T_i$ and each neighborhood of $T_i \cap D$ in $T_i$ contains a meridional disk with holes for $T_i$.

Let $J$ be a simple closed curve core of $T_0$.  Let $n$ be the interlacing number of $(J \cap C, J \cap D)$. Let $\widetilde{C}$ and $\widetilde{D}$ be closed neighborhoods in $J$ of  $J \cap C$ and $J \cap D$, respectively so that the interlacing number for  $(\widetilde{C},\widetilde{D})$ is also $n$.   Let $H_C$ be a meridional disk with holes in a neighborhood of $C \cap T_n$  in $T_n$  and $H_D$ be a meridional disk with holes in a neighborhood of $D \cap  T_n$  in $T_n$ so that

\begin{enumerate}
\item  $H_C \cap H_D = \emptyset$
\item  $H_C \cap J \subset \widetilde{C}$,  $H_D \cap J \subset \widetilde{D}$
\item $H_C$ and $H_D$ are in general position with respect to $T_i, 0 \le i \le n$.
\end{enumerate}

The interlacing number of
 $(H_C, H_D )$ in $T_{n}$
 is at least one since both intersections are nonempty. By Corollary \ref{corollary} the interlacing number of 
$(H_C, H_D)$ in $T_{n-j}$ is greater than $j$.  In particular, when $j=n$, this implies that the interlacing number of 
$(H_C, H_D)$ in $T_0$ is greater than $n$.
So for some $k>n$, there are points $c_1,\cdots, c_k$ in $(H_C \cap J)$ and points
$d_1,\cdots, d_k$ in $(H_D \cap J)$ that form a $k$-interlacing in $J$. These same points show that the interlacing number of  $(\widetilde{C},\widetilde{D})$ in $J$ is also greater than $n$, a contradiction to the earlier statement that this interlacing number is exactly $n$.

Thus $M$ can not be expressed as the union of two homeomorphic copies of $R^3$.
\qed

\begin{corollary}
There are uncountably many distinct contractible 3-manifolds that fail to have the double 3-space property.
\end{corollary}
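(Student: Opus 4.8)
The plan is to combine Theorem~\ref{McMTheorem} with the classical fact, going back to McMillan~\cite{McM62}, that the construction of genus one contractible $3$-manifolds from prescribed sequences of geometric indices produces uncountably many pairwise non-homeomorphic manifolds. First I would observe that, by Theorem~\ref{McMTheorem}, \emph{every} McMillan contractible $3$-manifold fails to have the double $3$-space property, since that property requires the manifold to be a union of two copies of $\RR^3$. Hence it suffices to exhibit uncountably many pairwise non-homeomorphic McMillan contractible $3$-manifolds.

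To do this I would fix, for each sequence $n_1, n_2, n_3, \dots$ of integers with each $n_i \ge 2$, a McMillan contractible $3$-manifold $M(n_1,n_2,\dots) = \bigcup_{i\ge 0} T_i$, an ascending union of unknotted solid tori in $\RR^3$ in which $T_i \subset \Int T_{i+1}$ is a McMillan Link of order $n_{i+1}$; by the Note following the definition of a McMillan Link this forces the geometric index of $T_i$ in $T_{i+1}$ to be $2n_{i+1}$, and the inner torus of each link is contractible in the outer one, so $M(n_1,n_2,\dots)$ is indeed a contractible genus one $3$-manifold (not homeomorphic to $\RR^3$). Multiplicativity of geometric index gives that the geometric index of $T_i$ in $T_j$ for $j>i$ is $\prod_{\ell=i+1}^{j} 2n_\ell$, and McMillan's argument—precisely the one already invoked after Definition~\ref{GabaiManifoldDef} for Gabai manifolds—then shows that if a prime $p$ divides $2n_i$ for infinitely many $i$ but divides $2m_i$ for only finitely many $i$, then $M(n_1,n_2,\dots)$ and $M(m_1,m_2,\dots)$ are not homeomorphic.

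Finally I would realize uncountably many such prime-divisibility dichotomies simultaneously. Fix an infinite set $P$ of odd primes, enumerate it as $p_1, p_2, \dots$, and for each subset $S \subseteq P$ build a sequence $n_1^S, n_2^S, \dots$ of integers $\ge 2$ so that a prime of $P$ divides infinitely many terms of the sequence if and only if it lies in $S$ (for instance, interleave the primes of $S$ so each appears in infinitely many terms and no prime outside $S$ ever occurs). For $S \ne S'$ there is a prime $p \in P$ lying in exactly one of them, so $p$ divides infinitely many terms of one sequence and only finitely many of the other, whence $M(n_1^S,\dots)$ and $M(n_1^{S'},\dots)$ are non-homeomorphic. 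Since $P$ has uncountably many subsets, this yields uncountably many pairwise non-homeomorphic McMillan contractible $3$-manifolds, each of which fails the double $3$-space property by Theorem~\ref{McMTheorem}.

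I do not expect a genuine obstacle here: the substantive work is Theorem~\ref{McMTheorem}, already established, and the non-homeomorphy count is a verbatim application of McMillan's geometric-index argument. The only points needing a moment's care are verifying that the construction can be carried out with all solid tori unknotted in $S^3$ and with each inner torus contractible in the next—both immediate from the ``winding-number-$n$ torus containing a Whitehead Link'' description of a McMillan Link—and checking that each $n_i^S \ge 2$ so that Theorem~\ref{McMTheorem} applies, which holds automatically once each $n_i^S$ is divisible by some odd prime (or, in the degenerate case of such a term, simply set it equal to $2$).

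\begin{proof}
This follows directly from Theorem~\ref{McMTheorem} and the discussion following Definition~\ref{GabaiManifoldDef}, applied to McMillan Links: McMillan's geometric-index argument produces uncountably many pairwise non-homeomorphic McMillan contractible $3$-manifolds, and by Theorem~\ref{McMTheorem} none of them has the double $3$-space property.
\end{proof}
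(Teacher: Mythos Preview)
Your proof is correct and matches the paper's own argument essentially verbatim: both derive the corollary directly from Theorem~\ref{McMTheorem} together with McMillan's geometric-index argument (the discussion following Definition~\ref{GabaiManifoldDef}) showing that uncountably many of these manifolds are pairwise non-homeomorphic. Your additional paragraphs merely unpack that cited discussion, which is fine but not required.
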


{\sc Proof.}  
This follows directly from Theorem \ref{McMTheorem} and the discussion following Definition \ref{GabaiManifoldDef}. 
\qed

\section{Questions and Acknowledgments}
%%%%%%%%%%%%%%%%%%%%%%%%%%%%%%%%

The results in this paper produce two infinite classes of genus one contractible 3-manifolds, one of which has the double 3-space property and one of which does not. There are many genus one contractible 3-manifolds that do not fit into either of these two classes. This leads to a number of questions. 
%%%%%%%%%%%%%%%%%%%%%%%%%%%%%%%%
\begin{question} Is it possible to characterize which genus one contractible  
3-manifolds have the double 3-space property?
\end{question}
\begin{question} Is it possible to characterize which  contractible 3-manifolds have the double 3-space property?
\end{question}
\begin{question} Is there a contractible 3-manifold M which is the union of two copies of $\RR^3$, but which does not have the double 3-space property?
\end{question}

The first and second authors were supported in part by the Slovenian Research Agency grant BI-US/15-16-029. 
The first author was supported in part by the National Science Foundation grant DMS0453304.
The first and third authors were supported in part by the National Science Foundation grant DMS0707489. 
The second author was supported in part by the Slovenian Research Agency grants P1-0292,  J1-8131, and J1-7025.  The authors would also like to thank the referee for a number of helpful suggestions on clarifying our presentation.

\section{Appendix: Geometric Index of Gabai Links}
%%%%%%%%%%%%%%%%%%%%%%%%%%%%%%%%
We show in this section in a Gabai Link of order $n$, the inner torus has geometric index $2n$ in the outer torus. We use a generalization of an argument introduced by Andrist and Wright in 
\cite{AW00}. Once we set up the notation, the argument is relatively direct.

\textbf{Setup:} Consider a Gabai Link $T_2\subset T_1$ of order $n$. Using the parameterization of the $S^1$ factor of $T_1$ as in Section \ref{SpecialSubsets},
place meridional disks $M1$ and $M2$ corresponding to parameters $x=1/6$ and
$x=5/6$.  This is illustrated in Figure \ref{fig-G1:first} for  a Gabai Link of order 3. Note that each meridional disk intersects the standard core of $T_2$ in $2n$ points.

For any simple closed curve $C$ in $M1$ or $M2$ that misses the standard core of $T_2$, let $C^{\prime}$ be the subdisk in $M1$ or $M2$ bounded by $C$ and let $o(C)$ be the number of points of intersection of  $C^{\prime}$ with the standard core of  $T_2$.
Notice that the geometric index of a torus in a larger torus is the same as the geometric index of any core of the interior torus in the larger torus,  so we  use the standard core.

A \emph{Bing Link} in $T_1$ is a pair of tori linked as in Figure \ref{fig-BW:second}. The same argument as in Remark \ref{WH-index} shows that the geometric index of a Bing Link in the outer torus is 2.

\begin{figure}[h!]
     \begin{center}
        \subfigure[Gabai Link with Meridional disks]{%
            \label{fig-G1:first}
            \includegraphics[width=0.5\textwidth]{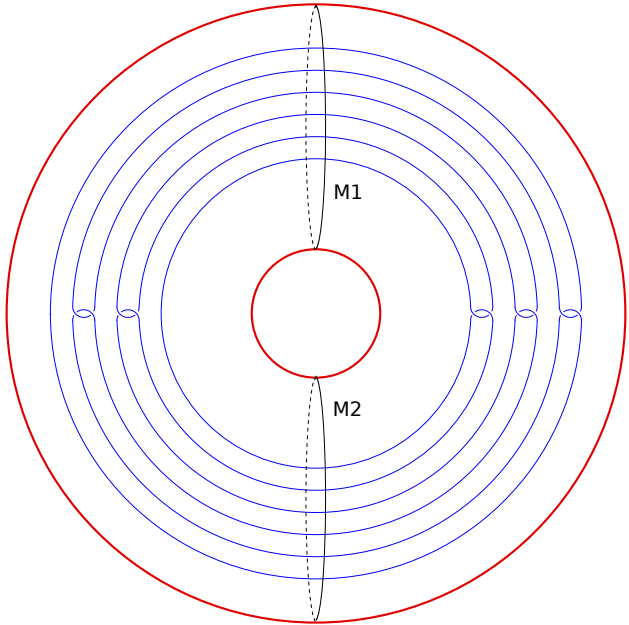}
        }%
        \subfigure[Link with Neighborhoods Removed]{%
           \label{fig-G1:second}
           \includegraphics[width=0.5\textwidth]{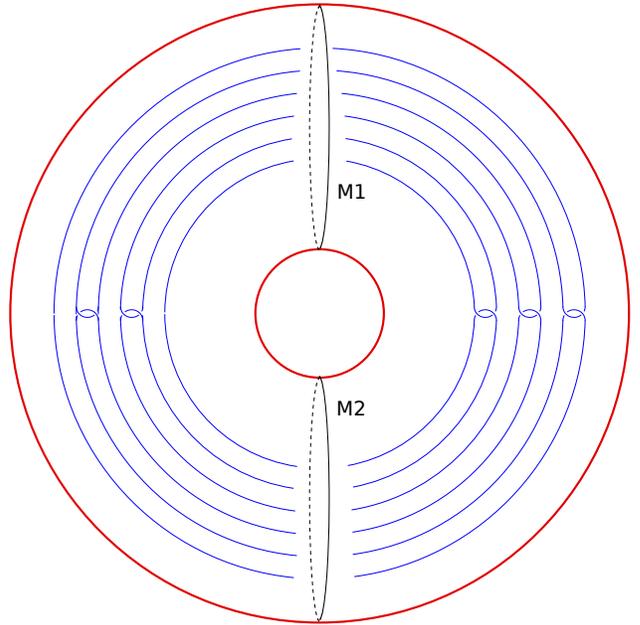}
        }\\ %  ------- End of the first row ----------------------%
        \subfigure[Bing and Whitehead Links]{%
            \label{fig-BW:first}
            \includegraphics[width=0.5\textwidth]{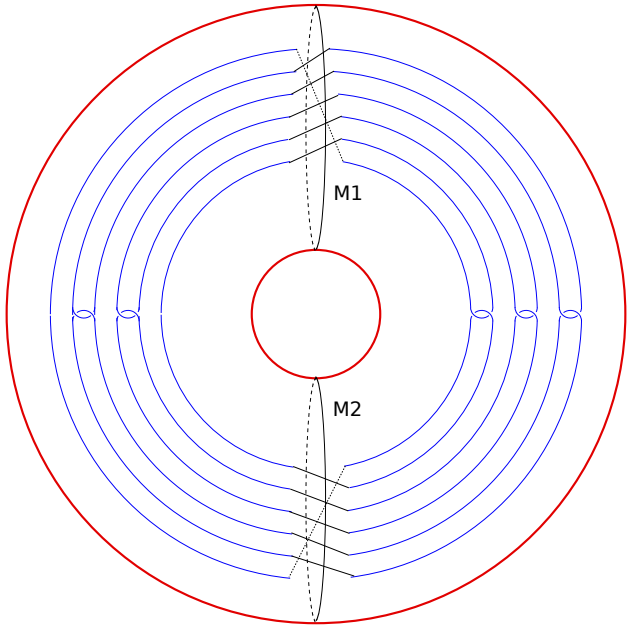}
        }%
        \subfigure[Bing Link]{%
           \label{fig-BW:second}
           \includegraphics[width=0.48\textwidth]{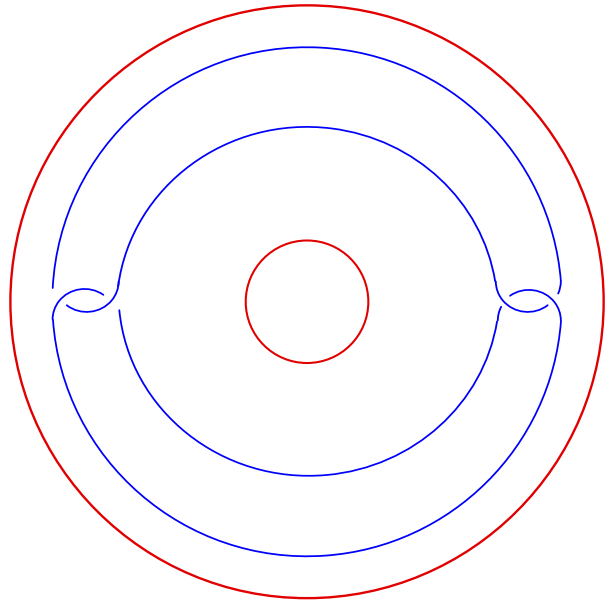}
        }\\ %  ------- End of the second row ----------------------%%
    \end{center}
    \caption{%
        Illustrating  the Geometric Index of  a Gabai Link     }%
   \label{GindexFig}
\end{figure}

\begin{lemma}\label{G index 1}
Any meridional disk in $T_1$ missing M1 and M2 must intersect any core of $T_2$ in at least 2n points. 
\end{lemma}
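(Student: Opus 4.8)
The plan is to localize $D$ inside a $3$-ball and then obtain the bound $2n$ by summing the contributions of $n$ pairwise disjoint Bing and Whitehead sublinks. First, since $D$ is disjoint from $M1\cup M2$, it lies in one of the two components of $T_1$ cut along $M1\cup M2$; call it $W$. In the $S^1$-parameterization of $T_1$, $W$ is the closure of the arc $\{1/6<x<5/6\}$ or of its complement, and in either case $W$ is a $3$-ball bounded by a copy of $M1$, a copy of $M2$, and an annulus in $\partial T_1$. As $D$ is a meridional disk of $T_1$, its boundary lies in that annulus, so $D$ separates $W$ into two balls $B_1$ and $B_2$, with the $2n$ points of $J\cap M1$ in $\partial B_1$ and the $2n$ points of $J\cap M2$ in $\partial B_2$, where $J$ denotes the standard core of $T_2$.

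Next I would use the structure revealed by deleting small regular neighborhoods of $M1$ and $M2$, as in Figures \ref{fig-G1:first}--\ref{fig-BW:first}: inside $W$ the part of $T_2$, completed across the deleted neighborhoods, consists of $n$ pieces, each of which together with a containing solid torus $\tau_j\subset W$ ($j=1,\dots,n$, the $\tau_j$ pairwise disjoint) forms a Bing Link or a Whitehead Link; on the side carrying only $n-1$ clasps, $n-1$ of these pieces come from clasps and the last records the two unclasped through-strands. For each $j$, I would show that $D\cap\tau_j$ contains a meridional disk with holes of $\tau_j$: since $D$ separates the two faces of the ball $W$ and $\tau_j$ is positioned so that a meridian of $\tau_j$ cannot be isotoped off $D$ within $W$ without meeting $M1\cup M2$, the inclusion on $\partial(D\cap\tau_j)$ does not extend to a map of $D\cap\tau_j$ into $\partial\tau_j$; the Meridional Disk with Holes Theorem makes this precise. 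By Remark \ref{WH-index}, and by the analogous argument showing a Bing Link has geometric index $2$, $D$ must then meet the part of $J$ inside $\tau_j$ in at least $2$ points --- immediately so for the $\tau_j$ coming from unclasped through-strands, since each such arc crosses the separating disk $D$ an odd and hence positive number of times. Summing over the $n$ disjoint $\tau_j$ gives $|D\cap J|\ge 2n$, and since geometric index is independent of the choice of core this is the assertion of the lemma.

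I expect the main obstacle to be the claim that $D$ restricts to an \emph{interior-essential} disk with holes of each $\tau_j$, i.e.\ that the separating behavior of $D$ in the ball $W$ forces meridional essentiality inside each sublink torus. This is exactly where the precise placement of $M1$ and $M2$ at parameters $1/6$ and $5/6$ --- chosen so that every clasp of the Gabai Link of order $n$ lies strictly between them --- is used, together with the bookkeeping quantity $o(C)$ for subdisks $C'$ of $M1$ and $M2$, which tracks how the strands of $J$ enter and leave neighborhoods of the $\tau_j$ across $M1$ and $M2$. A secondary, purely combinatorial, point is checking that both complementary sides of $T_1$ yield exactly $n$ index-$2$ contributions, even though one side carries $n$ clasps and the other only $n-1$.
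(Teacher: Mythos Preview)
Your instinct to reduce to Bing and Whitehead sublinks is exactly right, and it is what the paper does, but the localization you propose introduces a real problem. Once you cut $T_1$ along $M1\cup M2$ and pass to a $3$-ball $W$, the ``completed'' pieces of $T_2$ you describe cannot sit inside $W$: completing the tubes across the deleted neighborhoods of $M1$ and $M2$ produces closed solid tori that wind all the way around $T_1$, so they necessarily meet both complementary balls. Hence there are no solid tori $\tau_j\subset W$ containing these completed pieces as Bing or Whitehead links, and the step ``$D\cap\tau_j$ contains a meridional disk with holes of $\tau_j$'' never gets off the ground. The obstacle you flag at the end is not merely technical; in the $3$-ball setting it is not clear how to formulate, let alone prove, the needed essentiality statement.

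The paper sidesteps all of this by staying in $T_1$. After removing a neighborhood $N$ of $M1\cup M2$, one reconnects the tubes of $T_2-N$ \emph{inside $N$} to obtain a new link $T_3\subset T_1$ consisting of $n-1$ Bing pairs and one Whitehead torus; the point is that $T_2-N$ and $T_3-N$ are literally the same set of arcs, so $D$ (which lies in $T_1-N$) meets the core of $T_2$ in exactly the same points as it meets the cores of $T_3$. Now $D$ is an honest meridional disk of $T_1$, and each Bing pair and the Whitehead component has geometric index $2$ in $T_1$, giving $2(n-1)+2=2n$ immediately. No $3$-ball, no $\tau_j$, and no interior-essentiality argument are needed. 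Your combinatorial worry about the two sides carrying different numbers of clasps also evaporates: the reconnected $T_3$ lives in all of $T_1$, and the count $n-1$ Bing plus $1$ Whitehead is intrinsic to the link, not to a choice of side.
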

\begin{proof}
Such a meridional disk lies in the complement of a neighborhood $N$ of M1 and M2. Form a collection $T_3$ of $n-1$ Bing Links and one Whitehead Link by taking $T_2-N$ and connecting the tubes on each side of $M1$ and $M2$ as in Figure \ref{fig-BW:first}.  Note that the  pair $(T_1 - N, T_2 -N)$ is homeomorphic to the pair  $(T_1 - N, T_3 -N)$.  Since the geometric index of Bing Links and Whitehead Links in $T_1$ is 2, the meridional disk $M$ intersects any core of $T_2$ in at least $2n$ points.
\end{proof}

\begin{lemma}\label{G index 2}
Let D be a disk with boundary  $C$ in the interior of $T_1$ so that $D\cap (M1\cup M2)=C$ and so that $C$ misses the standard core of $T_2$. 
Then $D$ intersects the standard core of $T_2$ in at least $o(C)$ points.
\end{lemma}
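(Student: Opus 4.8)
The plan is to push all $o(C)=|C'\cap K|$ intersection points of the standard core $K$ of $T_2$ with $C'$ onto the disk $D$. Put $C$, $D$, and $K$ in general position, and assume $C\subset M1$ (the case $C\subset M2$ is symmetric), so that $C'$ is the subdisk of $M1$ bounded by $C$ and $|M1\cap K|=2n$. Since $D\cap M1=C$ and $D\cap M2=\emptyset$, the open disk $D-C$ lies in a single component of $T_1-(M1\cup M2)$; let $\overline P$ be the closure of that component. Then $\overline P$ is a $3$-ball having $M1$ among its boundary faces, $C\subset M1\subset\partial\overline P$, and $D$ is a properly embedded disk in $\overline P$, which therefore splits $\overline P$ into two $3$-balls; let $B$ be the one with $\partial B=D\cup C'$.

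Now $K\cap\overline P$ is a disjoint union of arcs, each having both endpoints on $M1\cup M2$, and every point of $K\cap C'$ is an endpoint of exactly one such arc. I would charge each of the $o(C)$ points of $K\cap C'$ to a distinct point of $D\cap K$, as follows. If an arc $\alpha$ of $K\cap\overline P$ has exactly one endpoint in $C'$, then near that endpoint $\alpha$ runs into $B$ while near its other endpoint (which lies on $M1-C'$ or on $M2$) it runs into the complementary ball; hence $\alpha$ crosses $D$ an odd number of times and carries at least one point of $D\cap K$, charged to its $C'$ endpoint. If an arc $\alpha$ has both endpoints in $C'$ and is not contained in $B$, then $\alpha$ leaves and re-enters $B$, necessarily across $D$, so it carries at least two points of $D\cap K$, charged to its two endpoints. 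The remaining case is an arc $\alpha\subset B$ with both endpoints in $C'$; this is where the structure of the Gabai Link enters. Such an $\alpha$ joins two points of $K\cap M1$ and is therefore a strand of a clasp whose partner arc $\beta\subset K\cap\overline P$ has its endpoints on $M2$. Capping $\alpha$ off by an arc $\mu\subset C'$ produces a loop $\lambda=\alpha\cup\mu$ bounding a disk $E$ in $\overline B$ with $E\cap D=\emptyset$; since $\beta$ is clasped with $\lambda$ it must meet $E$ and so enter $B$, while its endpoints lie on $M2$ outside $\overline B$, so it must also leave $B$, and the only part of $\partial B$ it can cross is $D$. Thus $\beta$ carries at least two points of $D\cap K$, charged to the two endpoints of $\alpha$. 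Since the clasp-partner assignment is injective and an arc with both endpoints on $M2$ is used in none of the other cases, all the charges land on distinct points, giving $|D\cap K|\ge o(C)$.

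The genuinely substantive point — and the step I expect to be the main obstacle — is the structural claim invoked in the last case: in a Gabai Link of order $n$, every arc of the core between the two consecutive meridional cross-section disks $M1$ and $M2$ that has both endpoints on the same disk is a strand of a clasp whose partner ends on the other disk, and distinct such arcs have distinct partners. I would verify this by reading the arcs of $K$ in each of the two regions of $T_1-(M1\cup M2)$ directly from the picture of the order-$n$ link — with $M1$ at $x=1/6$ and $M2$ at $x=5/6$, one region carries $n$ such clasps (all its core arcs doubling back, $n$ on each side, paired up) and the other carries the remaining clasps together with two arcs running straight through — or, more invariantly, by cutting along a neighborhood of $M1\cup M2$ and using the Bing/Whitehead decomposition already exploited in the proof of Lemma \ref{G index 1}, whose pieces have geometric index $2$ and hence admit no boundary-parallel core sub-arc. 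The remaining verifications — the ``which side of $D$'' bookkeeping near each endpoint, and that a properly embedded disk separates a $3$-ball into two $3$-balls — are routine.
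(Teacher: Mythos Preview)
Your argument appears to work, but it takes a genuinely different and much more laborious route than the paper's. The paper proves the lemma in three lines by a disk-swap: assuming $C\subset M1$, form the meridional disk
\[
D_1=(M1-C')\cup D.
\]
This disk meets the standard core $K$ in $(2n-o(C))+|D\cap K|$ points, it misses $M2$ (since $D\cap M2=\emptyset$ and $M1\cap M2=\emptyset$), and after pushing the annular collar $M1-C'$ slightly off $M1$ (into the region containing $\mathrm{int}\,D$) it misses $M1$ as well while preserving its intersection count with $K$. Lemma~\ref{G index 1} then forces $(2n-o(C))+|D\cap K|\ge 2n$, i.e.\ $|D\cap K|\ge o(C)$.

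By contrast, your proof carries out a direct arc-by-arc charging inside the $3$-ball $\overline P$, and its correctness hinges on the structural claim you yourself flag as the main obstacle: that in a Gabai Link every same-disk-to-same-disk arc of $K$ in $\overline P$ has a uniquely assigned clasp partner ending on the \emph{other} disk. That claim is true for Gabai Links as drawn, but it is a link-specific combinatorial fact that you only sketch how to verify, and Case~3 collapses without it. The paper's swap argument completely sidesteps this: it never looks at the arc pattern of $K$ in $\overline P$ at all, and reduces everything to the single black-box input of Lemma~\ref{G index 1}. The payoff is both brevity and robustness --- the same two-line reduction would work for any link for which the analogue of Lemma~\ref{G index 1} holds, with no further case analysis. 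Your approach, while valid here, is tied to the explicit clasp structure and would have to be redone from scratch for a different link.
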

\begin{proof}
Without loss of generality, assume $C\subset M1$. If $D$ intersected the standard  core of $T_2$ in fewer than $o(C)$ points,
$D_1=(M1-C^{\prime}) \cup D$ (where $C^\prime$ is as in the setup before Lemma 7.1) would intersect the standard core of $T_2$ in fewer than $2n$ points. This disk $D_1$ could be pushed slightly off $M1$, preserving intersections so as to contradict Lemma \ref{G index 1}.
\end{proof}

\begin{lemma} \label{G index 3}
In a Gabai Link of order n, the inner torus has geometric index 2n in the outer torus.
\label{gabaiindex}
\end{lemma}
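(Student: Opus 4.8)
The plan is to prove Lemma \ref{G index 3} by combining the lower bound already established in Lemmas \ref{G index 1} and \ref{G index 2} with a matching upper bound coming from the explicit picture of the Gabai Link. By Definition \ref{geometric}, the geometric index $N(T_2, T_1)$ is the minimum over all meridional disks $M$ of $T_1$ of the number of intersection points of $M$ with the standard core of $T_2$; equivalently, by the remark in the Setup, the minimum over all cores of $T_2$ of their intersection with a fixed meridional disk. For the upper bound, I would simply observe that either of the meridional disks $M1$ or $M2$ placed in the Setup already meets the standard core of $T_2$ in exactly $2n$ points (as noted there), so $N(T_2, T_1) \le 2n$. The real content is therefore the lower bound $N(T_2, T_1) \ge 2n$.

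For the lower bound, I would take an arbitrary meridional disk $D$ of $T_1$, put it in general position with respect to $M1 \cup M2$ and with respect to the standard core of $T_2$, and show $D$ meets the core in at least $2n$ points. First, I would arrange (by a small isotopy, or by the standard innermost-disk argument) that $D$ misses the standard core of $T_2$ except at the intended transverse intersection points, and that $D \cap (M1 \cup M2)$ is a disjoint union of simple closed curves (no arcs, since $D$ is closed in $T_1$ and $M1, M2$ are meridional disks — here one uses that $\partial D \subset \partial T_1$ can be taken disjoint from $M1, M2$, which lie in ${\rm Int}\, T_1$). If $D \cap (M1 \cup M2) = \emptyset$, then $D$ is a meridional disk missing $M1$ and $M2$, and Lemma \ref{G index 1} gives the bound $\ge 2n$ immediately. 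Otherwise, consider an innermost such curve $C$ on $D$, bounding a subdisk $D_0 \subset D$ with $D_0 \cap (M1 \cup M2) = C$; after an initial isotopy we may assume $C$ also misses the standard core of $T_2$, so $o(C)$ is defined. Lemma \ref{G index 2} says $D_0$ meets the core in at least $o(C)$ points. The remaining task is a bookkeeping argument: summing the contributions $o(C)$ over a suitable family of curves of intersection recovers the full count of $2n$ core points, because each of the $2n$ points where the core crosses $M1$ (say) must be "captured" inside some innermost-type subdisk, since the core is a closed curve that cannot escape to $\partial T_1$.

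More precisely, the step I expect to be the main obstacle is organizing this innermost-curve reduction into a clean induction. The cleanest route: induct on the number $|D \cap (M1 \cup M2)|$ of intersection curves. In the base case the curve count is zero and Lemma \ref{G index 1} applies. For the inductive step, take an innermost curve $C$ bounding $D_0 \subset D$ with interior disjoint from $M1 \cup M2$; say $C \subset M1$, bounding the subdisk $C' \subset M1$. Form a new disk $D' = (D \setminus D_0) \cup (M1 \setminus C')$ pushed slightly off $M1$ (on the appropriate side). Then $D'$ is again a meridional disk of $T_1$, its count of intersection curves with $M1 \cup M2$ is strictly smaller, and the number of core points on $D'$ equals (number on $D \setminus D_0$) plus (number on $M1 \setminus C'$) $= (\text{number on } D) - (\text{number on } D_0) + (2n - o(C))$. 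By Lemma \ref{G index 2}, (number on $D_0$) $\ge o(C)$, so (number on $D'$) $\le$ (number on $D$) $- o(C) + 2n - o(C) \le$ (number on $D$) $+ 2n - 2o(C)$; combined the other direction with the inductive hypothesis (number on $D' \ge 2n$), one gets (number on $D$) $\ge 2o(C) \ge 0$ — which is not yet enough, so instead I would run the induction to replace $D$ by a meridional disk disjoint from $M1 \cup M2$ while only \emph{decreasing} the core-intersection count, i.e. choose at each stage the surgery that does not increase intersections (swap to whichever of $C'$ or its complement has the smaller core-count, using $o(C) \le 2n - o(C)$ or vice versa as appropriate and the fact that $D_0$ already pays $\ge \min$). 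After finitely many steps we reach a meridional disk disjoint from $M1 \cup M2$ with no more core intersections than the original $D$, and Lemma \ref{G index 1} finishes the bound. Hence $N(T_2, T_1) = 2n$. \qed
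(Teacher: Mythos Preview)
Your approach is the same as the paper's: an upper bound from $M1$, and a lower bound by innermost-curve surgery using Lemmas~\ref{G index 1} and~\ref{G index 2}. But there are two concrete gaps in your execution.

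First, the surgery you write down is wrong. If $C$ bounds $D_0\subset D$ and $C'\subset M1$, the correct replacement is $(D\setminus D_0)\cup C'$, not $(D\setminus D_0)\cup(M1\setminus C')$. The set $M1\setminus C'$ is an annulus, so your $D'$ has two boundary components and is not a meridional disk at all; this is why your arithmetic produced the useless bound. With the correct surgery the count of core points changes by $o(C)-|D_0\cap\text{core}|\le 0$ directly from Lemma~\ref{G index 2}, so no case split between $C'$ and its complement is needed and the induction goes through cleanly. (The paper phrases this as a minimality argument rather than an explicit induction, but it is the same move.)

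Second, you do not address embeddedness of the surgered disk. Since $D\setminus D_0$ may still meet $M1$ in other curves, $(D\setminus D_0)\cup C'$ is in general only an immersed disk with double-curve self-intersections. The paper handles this by cutting and regluing along the double curves to produce a genuine embedded meridional disk with the same number of core intersections, then pushing off $M1\cup M2$. Without this step your induction does not actually produce a meridional disk to which Lemma~\ref{G index 1} can be applied.
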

\begin{proof}
The geometric index is  $\leq2n$ since both $M1$ and $M2$ intersect the standard core of $T_2$ in $2n$ points. So it suffices to show that the geometric index is  $\geq 2n$.

Let $D$ be a meridional disk in $T_1$ that intersects the standard core of $T_2$ in $k$ points where $k$ is minimal. Without loss of generality, $D$ is in general position with respect to $M1\cup M2$ and $\partial D \cap \partial (M1\cup M2)=\emptyset$. Among all such disks $D$, choose one so that the number of simple closed curves in $D\cap (M1\cup M2)$ is minimal. If there are no curves of intersection, Lemma \ref{G index 1} implies that $k\geq 2n$. 

If there are curves of intersection, choose an innermost (on D) such curve $C$. The disk $D^{\prime}$ in $D$ bounded by $C$ must intersect the standard core of $T_2$ in at least $o(C)$ points by Lemma \ref{G index 2}. $D^\prime$ can be replaced by $C^\prime$ without increasing the number of intersections with the standard core of $T_2$. In the new (possibly singular) disk $(D-D^{\prime}) \cup C^{\prime}$, any singularities  consist of disjoint double curves. These double curves can be cut apart and reattached so as to form a nonsingular disk $D^{\prime \prime}$ preserving the number of intersections with the standard core. See \cite[p. 42]{He04} for a discussion of double curves. The disk $D^{\prime\prime}$ can then be pushed slightly off $M1\cup M2$ thus decreasing the number of curves of intersection, contradicting the assumption that the number of such curves is minimal.
\end{proof}

\begin{bibdiv}
\begin{biblist}

\bib{AS89}{article}{
   author={Ancel, Fredric D.},
   author={Starbird, Michael P.},
   title={The shrinkability of Bing-Whitehead decompositions},
   journal={Topology},
   volume={28},
   date={1989},
   number={3},
   pages={291--304},
   issn={0040-9383},
   review={\MR{1014463 (90g:57014)}},
   doi={10.1016/0040-9383(89)90010-4},
}

\bib{AW00}{article}{
  author={Andrist, Kathryn B.},
   author={Wright, David G.},
   title={On computing the geometric index},
   note={Proc. 17th Geometric Topology Workshop, Colorado College, Colorado Springs, 2000, pp. 35-38},
}

\bib{Bro61}{article}{
   author={Brown, Morton},
   title={The monotone union of open $n$-cells is an open $n$-cell},
   journal={Proc. Amer. Math. Soc.},
   volume={12},
   date={1961},
   pages={812--814},
   issn={0002-9939},
   review={\MR{0126835 (23 \#A4129)}},
}

\bib{Dav07}{book}{
   author={Daverman, Robert J.},
   title={Decompositions of manifolds},
   note={Reprint of the 1986 original},
   publisher={AMS Chelsea Publishing, Providence, RI},
   date={2007},
   pages={xii+317},
   isbn={978-0-8218-4372-7},
   review={\MR{2341468 (2008d:57001)}},
}

\bib{DV09}{book}{
   author={Daverman, Robert J.},
   author={Venema, Gerard A.},
   title={Embeddings in manifolds},
   series={Graduate Studies in Mathematics},
   volume={106},
   publisher={American Mathematical Society, Providence, RI},
   date={2009},
   pages={xviii+468},
   isbn={978-0-8218-3697-2},
   review={\MR{2561389 (2011g:57025)}},
}

\bib{Dol95}{book}{
   author={Dold, Albrecht},
   title={Lectures on algebraic topology},
   series={Classics in Mathematics},
   note={Reprint of the 1972 edition},
   publisher={Springer-Verlag, Berlin},
   date={1995},
   pages={xii+377},
   isbn={3-540-58660-1},
   review={\MR{1335915}},
   doi={10.1007/978-3-642-67821-9},
} 

\bib{Gab11}{article}{
   author={Gabai, David},
   title={The Whitehead manifold is a union of two Euclidean spaces},
   journal={J. Topol.},
   volume={4},
   date={2011},
   number={3},
   pages={529--534},
   issn={1753-8416},
   review={\MR{2832566 (2012i:57037)}},
   doi={10.1112/jtopol/jtr010},
}

\bib{GRWZ11}{article}{
   author={Garity, Dennis},
   author={Repov{\v{s}}, Du{\v{s}}an},
   author={Wright, David},
   author={{\v{Z}}eljko, Matja{\v{z}}},
   title={Distinguishing Bing-Whitehead Cantor sets},
   journal={Trans. Amer. Math. Soc.},
   volume={363},
   date={2011},
   number={2},
   pages={1007--1022},
   issn={0002-9947},
   review={\MR{2728594 (2011j:54034)}},
   doi={10.1090/S0002-9947-2010-05175-X},
}

\bib{He04}{book}{
   author={Hempel, John},
   title={3-manifolds},
   note={Reprint of the 1976 original},
   publisher={AMS Chelsea Publishing, Providence, RI},
   date={2004},
   pages={xii+195},
   isbn={0-8218-3695-1},
   review={\MR{2098385 (2005e:57053)}},
}

\bib{McM62}{article}{
   author={McMillan, D. R., Jr.},
   title={Some contractible open $3$-manifolds},
   journal={Trans. Amer. Math. Soc.},
   volume={102},
   date={1962},
   pages={373--382},
   issn={0002-9947},
   review={\MR{0137105 (25 \#561)}},
}

\bib{Mye88}{article}{
   author={Myers, Robert},
   title={Contractible open $3$-manifolds which are not covering spaces},
   journal={Topology},
   volume={27},
   date={1988},
   number={1},
   pages={27--35},
   issn={0040-9383},
   review={\MR{935526 (89c:57012)}},
   doi={10.1016/0040-9383(88)90005-5},
}

\bib{Mye99a}{article}{
   author={Myers, Robert},
   title={Contractible open $3$-manifolds which non-trivially cover only
   non-compact $3$-manifolds},
   journal={Topology},
   volume={38},
   date={1999},
   number={1},
   pages={85--94},
   issn={0040-9383},
   review={\MR{1644087 (99g:57022)}},
   doi={10.1016/S0040-9383(98)00004-4},
}

\bib{Rol90}{book}{
   author={Rolfsen, Dale},
   title={Knots and links},
   series={Mathematics Lecture Series},
   volume={7},
   note={Corrected reprint of the 1976 original},
   publisher={Publish or Perish, Inc., Houston, TX},
   date={1990},
   pages={xiv+439},
   isbn={0-914098-16-0},
   review={\MR{1277811 (95c:57018)}},
}

\bib{RS82}{book}{
   author={Rourke, Colin P.},
   author={Sanderson, Brian J.},
   title={Introduction to piecewise-linear topology},
   series={Springer Study Edition},
   note={Reprint},
   publisher={Springer-Verlag, Berlin-New York},
   date={1982},
   pages={viii+123},
   isbn={3-540-11102-6},
   review={\MR{665919 (83g:57009)}},
}

\bib{Sch53}{article}{
   author={Schubert, Horst},
   title={Knoten und Vollringe},
   language={German},
   journal={Acta Math.},
   volume={90},
   date={1953},
   pages={131--286},
   issn={0001-5962},
   review={\MR{0072482 (17,291d)}},
}

\bib{SeTh80}{book}{
   author={Seifert, Herbert},
   author={Threlfall, William},
   title={A textbook of topology},
   series={Pure and Applied Mathematics},
   volume={89},
   note={Translated from the German edition of 1934 by Michael A. Goldman;
   With a preface by Joan S. Birman;
   With ``Topology of $3$-dimensional fibered spaces'' by Seifert;
   Translated from the German by Wolfgang Heil},
   publisher={Academic Press, Inc. [Harcourt Brace Jovanovich, Publishers],
   New York-London},
   date={1980},
   pages={xvi+437},
   isbn={0-12-634850-2},
   review={\MR{575168}},
}
\bib{Whi35}{article}{
   author={Whitehead, J. H. C.},
   title={A certain open manifold whose group is unity},
   journal={Quart. J. Math.},
   volume={6},
   date={1935},
   number={6},
   pages={268--279},
}

\bib{Wri89}{article}{
   author={Wright, David G.},
   title={Bing-Whitehead Cantor sets},
   journal={Fund. Math.},
   volume={132},
   date={1989},
   number={2},
   pages={105--116},
   issn={0016-2736},
   review={\MR{1002625 (90d:57020)}},
}

\bib{Wri92}{article}{
   author={Wright, David G.},
   title={Contractible open manifolds which are not covering spaces},
   journal={Topology},
   volume={31},
   date={1992},
   number={2},
   pages={281--291},
   issn={0040-9383},
   review={\MR{1167170 (93f:57004)}},
   doi={10.1016/0040-9383(92)90021-9},
}

\end{biblist}
\end{bibdiv}
\end{document}